\renewcommand{\subsection}{\subsubsection}
\newtheorem{theorem}{Theorem}[section]
\newtheorem{lemma}{Lemma}[section]
\newtheorem{proposition}{Proposition}[section]
\newenvironment{proof}{\noindent{\bf Proof}.}{\hfill $\Box$ \vspace*{4mm}}
\newtheorem{remark}{Remark}[section]
\newtheorem{definition}{Definition}[section]
\renewcommand\appendix{\par
  \setcounter{section}{0}
   \renewcommand\thesection{Appendix \Alph{section}}
 }
\newcommand{\nt}{|\hspace{-0.7pt}|\hspace{-0.7pt}|}
\newcommand{\Real}{{\rm Re}\,}
\begin{document}

\title{\bf Structural stability of shock waves \\ in 2D compressible elastodynamics}

\author{{\bf Alessandro Morando}\\
DICATAM, Sezione di Matematica, Universit\`a di Brescia \\ Via Valotti, 9, 25133 Brescia, Italy\\
E-mail: alessandro.morando@ing.unibs.it
\and
{\bf Yuri Trakhinin}\\
Sobolev Institute of Mathematics, Koptyug av. 4, 630090 Novosibirsk, Russia\\
and\\
Novosibirsk State University, Pirogova str. 1, 630090 Novosibirsk, Russia\\
E-mail: trakhin@math.nsc.ru
\and
{\bf Paola Trebeschi}\\
DICATAM, Sezione di Matematica, Universit\`a di Brescia \\ Via Valotti, 9, 25133 Brescia, Italy\\
E-mail: paola.trebeschi@ing.unibs.it
}

\date{ }

\maketitle

\begin{abstract}
We study the two-dimensional structural stability of shock waves in a compressible isentropic inviscid elastic fluid in the sense of the local-in-time existence and uniqueness of discontinuous shock front solutions of the equations of compressible elastodynamics in two space dimensions. By the energy method based on a symmetrization of the wave equation and giving an a priori estimate  without loss of derivatives for solutions of the constant coefficients linearized problem we find a condition sufficient for the uniform stability of rectilinear shock waves. Comparing this condition with that for the uniform stability of shock waves in isentropic gas dynamics, we make the conclusion that the elastic force plays stabilizing role. In particular, we show that, as in isentropic gas dynamics, all compressive shock waves are uniformly stable for convex equations of state. Moreover, for some particular deformations (and general equations of state), by the direct test of the uniform Kreiss--Lopatinski condition we show that the stability condition found by the energy method is not only sufficient but also necessary for uniform stability. As is known, uniform stability implies structural stability of corresponding curved shock waves.
\end{abstract}

\section{Introduction}
\label{s1}

We consider the equations of elastodynamics \cite{Daf,Gurt,Jos} governing the  motion of a compressible isentropic inviscid elastic fluid.
We restrict ourself to two-dimensional (2D) elastic flows. Then, the elastodynamics equations read
\begin{equation}\label{7}
\left\{
\begin{array}{l}
 \partial_t\rho  +{\rm div}\, (\rho v )=0, \\
 \partial_t(\rho v ) +{\rm div}\,(\rho v\otimes v  ) + {\nabla}p-{\rm div}\,(\rho FF^{\top})=0,\\
 \partial_t(\rho F_j ) +{\rm div}\,(\rho F_j\otimes v  -  v\otimes  \rho F_j) =0,\quad j=1,2,
\end{array}
\right.
\end{equation}
where $\rho$ is the density, $v\in\mathbb{R}^2$  is the velocity, $F\in \mathbb{M}(2,2)$ is the deformation gradient,  $F_1=(F_{11},F_{21})$ and $F_2=(F_{12},F_{22})$ are the columns of F, and the pressure $p=p(\rho)$ is a smooth function of $\rho$. Moreover, system \eqref{7} is supplemented by the identity ${\rm div}\,(\rho F^{\top})=0$ which is the set of the two divergence constraints
\begin{equation}\label{8}
{\rm div}\,(\rho F_j)=0\quad (j=1,2)
\end{equation}
on initial data, i.e., one can show that if equations \eqref{8} are satisfied initially, then they hold for all $t > 0$. We note that system \eqref{7} arises as the inviscid limit of the equations of compressible viscoelasticity \cite{Daf,Gurt,Jos} of Oldroyd type \cite{Old1,Old2}.

Taking into account the divergence constraints \eqref{8}, we easily symmetrize the system of conservation laws \eqref{7} by rewriting it as
\begin{equation}
\left\{
\begin{array}{l}
{\displaystyle\frac{1}{\rho c^2}\,\frac{{\rm d} p}{{\rm d}t} +{\rm div}\,{v} =0,}\\[6pt]
{\displaystyle\rho\, \frac{{\rm d}v}{{\rm d}t}+{\nabla}p -\rho (F_1\cdot\nabla )F_1 -\rho (F_2\cdot\nabla )F_2=0 ,}\\[6pt]
\rho\,{\displaystyle \frac{{\rm d} F_j}{{\rm d}t}-\rho \,(F_j\cdot\nabla )v =0,} \quad j=1,2,
\end{array}\right. \label{9}
\end{equation}
where $c^2=p'(\rho)$ is the square of the sound speed and ${\rm d} /{\rm d} t =\partial_t+({v} \cdot{\nabla} )$ is the material derivative. Equations \eqref{9} form the symmetric system
\begin{equation}
\label{10}
A_0(U )\partial_tU+A_1(U)\partial_1U +A_2(U)\partial_2U=0
\end{equation}
for $U=(p,v,F_1,F_2)$, where $A_0= {\rm diag} (1/(\rho c^2) ,\rho I_6)$,
\[
A_1=\begin{pmatrix}
{\displaystyle\frac{v_1}{\rho c^2}} & e_1 & \underline{0} & \underline{0}  \\[7pt]
e_1^{\top}&\rho v_1I_2 & -\rho F_{11}I_2 & -\rho F_{12}I_2  \\[3pt]
\underline{0}^{\top} &-\rho F_{11}I_2 & \rho v_1I_2 & O_2 \\
\underline{0}^{\top} &-\rho F_{12}I_2 & O_2 & \rho v_1I_2
\end{pmatrix},\quad
A_2=\begin{pmatrix}
{\displaystyle\frac{v_2}{\rho c^2}} & e_2 & \underline{0} & \underline{0}  \\[7pt]
e_2^{\top}&\rho v_2I_2 & -\rho F_{21}I_2 & -\rho F_{22}I_2  \\[3pt]
\underline{0}^{\top} &-\rho F_{21}I_2 & \rho v_2I_2 & O_2 \\
\underline{0}^{\top} &-\rho F_{22}I_2 & O_2 & \rho v_2I_2
\end{pmatrix},
\]
$e_1=(1,0)$, $e_2=(0,1)$, $\underline{0}=(0,0)$ and  $I_m$ and $O_m$ denote the unit and zero matrices of order $m$ respectively. In \eqref{10} we think of the density as a function of the pressure: $\rho =\rho (p)$, $c^2=1/\rho'(p)$. System \eqref{10} is symmetric hyperbolic if   $A_0>0$, i.e.,
\begin{equation}
\rho >0,\quad  \rho '(p)>0. \label{11}
\end{equation}

The goal of this paper is the study of structural stability of shock waves for the system of hyperbolic conservation laws \eqref{7}. By structural stability we mean the local-in-time existence and uniqueness in Sobolev spaces of shock front solutions. If in the first three scalar equations of system \eqref{7} we set formally $F=0$, then we get the system of isentropic gas dynamics (for the 2D case). The structural stability of shock waves in isentropic gas dynamics was proved by Majda \cite{M2} provided that the uniform stability condition found by him in \cite{M1} is satisfied at each point of the initial shock front. The local-in-time existence and uniqueness theorem in \cite{M2} is proved by the classical fixed-point argument and based on the usage of the linear stability results obtained in \cite{M1} for the linearized problems with constant and variable coefficients.

A priori estimates for the linearized constant coefficients problem were deduced in \cite{M1} by Kreiss' symmetrizers technique \cite{Kreiss,BS} whereas these estimates were extended to the case of variable coefficients by pseudo-differential calculus. It should be noted that in isentropic gas dynamics there are no violently unstable shock waves, i.e., the linearized constant coefficients problem for them always satisfies the Kreiss-Lopatinski condition \cite{Kreiss} (we will below sometimes call it just Lopatinski condition) that is equivalent to the fact that Hadamard-type ill-posedness examples cannot be constructed for this problem. At the same time, a priori estimates without loss of derivatives from the data to the solutions can be derived only for the case when the uniform Kreiss--Lopatinski condition \cite{Kreiss} holds. For shock waves this condition is called uniform stability condition and shock waves for a corresponding domain of parameters of the basic state (``unperturbed flow'') defined by this condition are called {\it uniformly stable}.

Clearly, only a priori estimates without loss of derivatives are, in general, suitable for their usage in the proof of the existence of solutions of the original nonlinear problem by the fixed-point argument. For the case when the Kreiss-Lopatinski condition is satisfied in a weak sense (we have weak or {\it neutral stability}), i.e., the uniform Kreiss-Lopatinski condition fails, only an a priori estimate with a loss of one tangential derivative was obtained in \cite{M1} for the linearized constant coefficients problem for shock waves in isentropic gas dynamics.

Regarding shock waves in full (non-isentropic) gas dynamics, for uniformly stable shocks the well-posedness of the linearized constant coefficients problem was first proved by Blokhin \cite{Bl79} (see also \cite{BThand}) by the energy method based on a symmetrization of the wave equation for the perturbation of the pressure. It should be noted that the domains of uniform stability, neutral stability and violent instability of gas dynamical shock waves were found by D'iakov \cite{D'iak54} by the normal modes analysis (without referring to the terminology connected with weak and uniform Lopatinski conditions).

The structural stability of uniformly stable gas dynamical shock waves was independently proved by Blokhin \cite{Bl81,Bl82} (see also \cite{BThand}) and Majda \cite{M2}. Moreover, Majda has also proved the structural stability of uniformly stable shock waves for abstract hyperbolic symmetrizable systems of conservation laws satisfying some block structure condition \cite{M2}. The results of Majda were then clarified and improved by M\'etivier \cite{Met} who, in particular, essentially relaxed Majda's assumptions about the smoothness of the initial data.

M\'etivier and Zumbrun \cite{MZ} have later extended Majda's structural stability results to hyperbolic symmetrizable systems with characteristics of variable multiplicities for which Majda's block structure condition fails. Several hypotheses introduced by M\'etivier and Zumbrun seem to be satisfied for a wide class of systems, in particular, for the system of ideal compressible magnetohydrodynamics (MHD) \cite{MZ,Kwon}. In MHD there are two types of Lax shocks \cite{Lax}: fast and slow shock waves. The linear stability of fast MHD shock  waves was analyzed in \cite{GK,BD,BTejm,BThand} for some particular cases. A complete 2D stability analysis of fast MHD shock waves was carried out in \cite{Tcmp} for a polytropic gas equation of state. Taking into account the results in \cite{MZ,Kwon}, uniformly stable fast shock waves found in \cite{Tcmp} are structurally stable. Regarding slow MHD shock waves, some results about their stability  can be found in \cite{LessDesh67,Fil,BThand,FT,FKS,BMZ}.

In general, the question about structural stability of neutrally stable shock waves is still an open problem. However, the local-in-time existence of neutrally stable shock waves in isentropic gas dynamics was proved by Coulombel and Secchi \cite{CS} by suitable Nash-Moser iterations. At the same time, it is still unclear whether, for example,  neutrally stable shock waves in full gas dynamics or neutrally stable MHD shocks found in \cite{BTejm,Tcmp} even for a polytropic gas equation of state are always structurally stable.

Returning to elastodynamics, we note that according to our knowledge there were no studies of shock waves for system \eqref{7}. In this connection, we can only mention characteristic discontinuities, namely, vortex sheets for system \eqref{7} whose linear and structural stability was recently studied by Chen, Hu and Wang \cite{CHW1,CHW2,Hu}.

We first show that, as in gas dynamics (see, e.g., \cite{BS,BThand,M1,M2,Met}), in elastodynamics all Lax shock waves are extreme shocks (see Section \ref{s2} where we formulate the free boundary problem for shock waves in elastic fluids). In Section \ref{s3}, we write down the constant coefficients linearized problem associated with rectilinear shock waves. The main difficulty appearing for shock waves in elastodynamics (even for the 2D case) is that, unlike isentropic gas dynamics \cite{M1} or even full gas dynamics \cite{D'iak54,M1}, it is impossible to perform a complete spectral analysis of the constant coefficients problem analytically, i.e., to find analytically the boundaries between the domains of uniform stability, neutral stability and violent instability. At the same time, even for the 2D case considered in this paper, the constant coefficients of the linear problem depend on {\it six} dimensionless parameters. Therefore, a complete numerical test of the Lopatinski condition like that performed in \cite{Tcmp} for fast MHD shock waves seems to be hardly realizable in practice. We overcome this difficulty (for the most part at least) by the energy method proposed by Blokhin \cite{Bl79}. The crucial point is that from the linearized system of elastodynamics with constant coefficients it is also possible to deduce a separate second-order equation for the perturbation of the pressure whose canonical form is the wave equation (see Section \ref{s4}). That is, for elastodynamics Blokhin's energy method \cite{Bl79} is, in some sense, more efficient than spectral analysis.

In Section \ref{s4}, by the energy method we deduce an a priori estimate  without loss of derivatives for solutions of the constant coefficients linearized problem under a condition sufficient for the uniform stability of rectilinear shock waves (see Theorem \ref{t1}). Comparing this condition with Majda's uniform stability condition \cite{M1} for shock waves in isentropic gas dynamics, we make the conclusion that the elastic force plays {\it stabilizing role}. Moreover, we show that, as in isentropic gas dynamics, all compressive shock waves are uniformly stable for convex equations of state. In particular, in a polytropic elastic fluid  all shock waves are structurally stable (see Theorem \ref{t2}).

Clearly, to show that the condition \eqref{usc'} in Theorem \ref{t1} is not only sufficient but also necessary for uniform (and so structural) stability we should show by spectral analysis that its violation leads to the violation of the uniform Lopatinski condition. At least for a part of a generically unbounded domain of six dimensionless parameters, this could be done by using the algorithm of numerical testing of the (uniform) Lopatinski condition proposed in \cite{Tcmp} for extreme shocks. This is postponed to future research. But, for particular deformations (and general equations of state) we still can do something analytically. In Section \ref{s5}, for the case of stretching and a ``symmetric'' case (when the diagonal elements of $F$ are zeros), by the direct test of the uniform Kreiss--Lopatinski condition we show that the stability condition \eqref{usc'} found by the energy method is not only sufficient but also {\it necessary} for uniform stability. We also show that for the particular deformations dealt with in Section \ref{s5} there are no violently unstable shock waves. Our spectral arguments in Section \ref{s5} are based on the definitions of the Lopatinski condition and the uniform Lopatinski condition given in \cite{Tcmp} for extreme shocks (they are equivalent to Kreiss' classical  definitions \cite{Kreiss}).

For convex equations of state (in particular, for a polytropic elastic fluid), we have thus completed the problem on the 2D structural stability of shock waves in isentropic elastic fluids. The main question left unsolved for  general equations of state is that whether shock waves can be violently unstable. Since, as we have shown, the elastic force plays stabilizing role for uniform stability, it is natural to suppose that the same is true for neutral stability, i.e., as in isentropic gas dynamics, there are no violently unstable shocks. However, at the present moment we see only one way to prove this, at least, for a part of the whole unbounded parameter domain of the constant coefficients problem. This is the numerical test of the Lopatinski condition based on the algorithm from \cite{Tcmp}. This is also postponed to future research.

\section{Statement of the free boundary problem for shock waves}
\label{s2}

Let $\Gamma (t)=\{ x_1=\varphi (t,x_2)\}$ be a curve of strong discontinuity for system \eqref{7}, i.e., we are interested in solutions of \eqref{7} that are smooth on either side of $\Gamma (t)$. As is known, to be weak solutions such piecewise smooth solutions should satisfy corresponding Rankine-Hugoniot jump conditions at each point of $\Gamma (t)$.  For the conservation laws \eqref{7} these jump conditions can be written in the following form:
\begin{align}
& [\mathfrak{m}]=0, \label{RH1}\\
& \mathfrak{m}[v_{\rm N}] +\left( 1+(\partial_2\varphi)^2 \right)[p]=\left[\rho (F_{1{\rm N}}^2+ F_{2{\rm N}}^2)\right],\label{RH2}\\
& \mathfrak{m}[v_{\tau}]=\left[\rho (F_{1{\rm N}}F_{1\tau}+ F_{2{\rm N}}F_{2\tau})\right],\label{RH3}\\
& \mathfrak{m}[F_{j{\rm N}}]=[\rho v_{\rm N}F_{j{\rm N}}],\quad j=1,2, \label{RH4}\\
& \mathfrak{m}[F_{j\tau}]=[\rho v_{\tau}F_{j{\rm N}}],\label{RH5}\quad j=1,2,\\
& [\rho F_{j{\rm N}}]=0,\quad j=1,2,\label{RH6}
\end{align}
where $[g]=g^+|_{\Gamma}-g^-|_{\Gamma}$ denotes the jump of $g$, with $g^{\pm}:=g$ in the domains
\[
\Omega^{\pm}(t)=\{\pm (x_1- \varphi (t,x_2))>0\},
\]
and
\[
\mathfrak{m}^{\pm}=\rho^{\pm} (v_{\rm N}^{\pm}-\partial_t\varphi),\quad v_{\rm N}^{\pm}=v_1^{\pm}-v_2^{\pm}\partial_2\varphi ,\quad
F_{j{\rm N}}^{\pm}=F_{1j}-F_{2j}\partial_2\varphi ,
\]
\[
v_{\tau}=v_1\partial_2\varphi +v_2,\quad F_{j\tau}=F_{1j}\partial_2\varphi +F_{2j};
\]
$\mathfrak{m}:=\mathfrak{m}^{\pm}|_{\Gamma}$ is the mass transfer flux across the discontinuity curve. Conditions \eqref{RH6} come actually from the constraint equations \eqref{8}. On the other hand, conditions \eqref{RH4} are rewritten as $\partial_t\varphi [\rho F_{j{\rm N}}]=0$. That is, conditions \eqref{RH4} are implied by \eqref{RH6} and can be thus excluded from system \eqref{RH1}--\eqref{RH6}.

We are interested in {\it shock waves}. For them, as in gas dynamics (see, e.g., \cite{BS,BThand,M1,M2,Met}), $\mathfrak{m}\neq 0$ and $[\rho ]\neq 0$. In view of \eqref{RH6}, conditions \eqref{RH3} and \eqref{RH5} form the following linear algebraic system for the jumps $[v_{\tau}]$, $[F_{1\tau}]$ and $[F_{2\tau}]$:
\begin{equation}\label{m0}
\begin{pmatrix}
\mathfrak{m} & -\rho^+ F_{1{\rm N}}^+ & -\rho^+ F_{2{\rm N}}^+ \\
-\rho^+ F_{1{\rm N}}^+ & \mathfrak{m} & 0 \\
-\rho^+ F_{2{\rm N}}^+ & 0 & \mathfrak{m}
\end{pmatrix}
\begin{pmatrix} [v_{\tau}]\\ [F_{1\tau}]\\ [F_{2\tau}]\end{pmatrix} =0\quad\mbox{on}\ \Gamma .
\end{equation}
Since $\mathfrak{m}\neq 0$, this system has a nonzero solution provided that $\mathfrak{m}^2=(\rho^+)^2\left.\left((F_{1{\rm N}}^+)^2+(F_{2{\rm N}}^+)^2 \right)\right|_{\Gamma}$. We assume that $\mathfrak{m}^2\neq(\rho^+)^2\left.\left((F_{1{\rm N}}^+)^2+(F_{2{\rm N}}^+)^2 \right)\right|_{\Gamma}\,$, i.e.,
\begin{equation}\label{m}
(v_{\rm N}^+-\partial_t\varphi)^2\neq (F_{1{\rm N}}^+)^2+(F_{2{\rm N}}^+)^2 \quad\mbox{on}\ \Gamma .
\end{equation}
In fact, by virtue of \eqref{RH1} and \eqref{RH6}, assumption \eqref{m} can be also written as $(v_{\rm N}^--\partial_t\varphi)^2\neq (F_{1{\rm N}}^-)^2+(F_{2{\rm N}}^-)^2$ on $\Gamma$. As we will see below, \eqref{m} holds thanks to the Lax conditions \cite{Lax}.

It follows from \eqref{m0} and \eqref{m} that
\[
[v_{\tau}]=0,\quad  [F_{1\tau}]=0,\quad [F_{2\tau}]=0.
\]
In particular, we see that, as for gas dynamical shock waves, the tangential component of the velocity is continuous on the shock front ($[v_{\tau}]=0$). In view of  \eqref{RH1} and \eqref{RH6}, we can also rewrite condition \eqref{RH2} as
\[
\mathfrak{M}[V]+\left( 1+(\partial_2\varphi)^2 \right)[p]=0,
\]
where $\mathfrak{M}=\mathfrak{m}^2-(\rho^+)^2\left.\left((F_{1{\rm N}}^+)^2+(F_{2{\rm N}}^+)^2 \right)\right|_{\Gamma}\neq 0$, cf. \eqref{m}, and $V=1/\rho$ is the specific volume (we assume that we are away from vacuum, cf. \eqref{11}). We thus have the following seven boundary conditions on the curve $\Gamma (t)$ of a shock wave:
\begin{equation}\label{bc}
[\mathfrak{m}]=0,\quad \mathfrak{M}[V]+\left( 1+(\partial_2\varphi)^2 \right)[p]=0,\quad [v_{\tau}]=0,\quad  [F_{j\tau}]=0,\quad [\rho F_{j{\rm N}}]=0,\quad j=1,2.
\end{equation}

The free boundary problem for shock waves is the problem for the systems
\begin{equation}
A_0(U^{\pm})\partial_tU^{\pm}+A_1(U^{\pm} )\partial_1U^{\pm}+A_2(U^{\pm} )\partial_2U^{\pm}=0\quad \mbox{in}\ \Omega^{\pm}(t),
\label{21}
\end{equation}
cf. \eqref{10}, with the boundary conditions \eqref{bc} on $\Gamma (t)$ and  the initial
\begin{equation}
{U}^{\pm} (0,{x})={U}_0^{\pm}({x}),\quad {x}\in \Omega^{\pm} (0),\quad \varphi (0,{x}_2)=\varphi _0({x}_2),\quad {x}_2\in\mathbb{R}.\label{indat}
\end{equation}
For problem \eqref{bc}--\eqref{indat} we should assume $\det F^{\pm}|_{t=0}\neq 0$ in $\Omega^{\pm} (0)$. Then, for smooth solutions $\det F^{\pm}\neq 0$ in $\Omega^{\pm} (t)$ on a short time interval (see also Remark \ref{det} below). Moreover, as for the Cauchy problem, the divergence constraints \eqref{8} are preserved by problem \eqref{bc}--\eqref{indat}.

\begin{proposition}
Suppose that problem \eqref{bc}--\eqref{indat} has a smooth solution $(U^+,U^-,\varphi)$ for $t\in [0,T]$ satisfying the shock wave assumption $\mathfrak{m}\neq 0$. Then, if the initial data \eqref{indat} satisfy \eqref{8} in $\Omega^{\pm} (0)$, then
\begin{equation}\label{8'}
{\rm div}\,(\rho^{\pm} F_j^{\pm})=0\quad \mbox{in}\ \Omega^{\pm}(t) \quad (j=1,2)
\end{equation}
for all $t\in [0,T]$.
\label{p1}
\end{proposition}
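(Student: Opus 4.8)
The plan is to propagate the constraint from the divergence form of the $F_j$--equations together with a special algebraic feature of their flux. First I would record that the seven conditions \eqref{bc} are equivalent to the full Rankine--Hugoniot system \eqref{RH1}--\eqref{RH6}: they were derived from it and, given \eqref{RH6}, the discarded relations \eqref{RH4}, \eqref{RH5} are recovered from $[v_\tau]=[F_{j\tau}]=0$ (because then $\rho F_{j{\rm N}}$ and $v_\tau$ are continuous, so $[\rho v_\tau F_{j{\rm N}}]=0$). Hence a smooth solution of \eqref{bc}--\eqref{indat} with $\mathfrak m\neq0$ is a weak solution of \eqref{7}, and in particular the $F_j$--equation
\[
\partial_t(\rho F_j)+{\rm div}\,(\rho F_j\otimes v-v\otimes\rho F_j)=0
\]
holds in $\mathcal D'((0,T)\times\mathbb R^2)$ for the globally defined, piecewise smooth field $\rho F_j$ that jumps across $\Gamma(t)$.

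The key observation is that the flux $T_{kl}=\rho F_{kj}v_l-v_k\rho F_{lj}$ is antisymmetric, $T_{kl}=-T_{lk}$. I would then take the distributional divergence of the displayed equation: the flux drops out entirely, since $\partial_k\partial_l T_{kl}=0$ for any antisymmetric $T$ (the symmetric operator $\partial_k\partial_l$ annihilates it), and one is left with
\[
\partial_t\big({\rm div}\,(\rho F_j)\big)=0\qquad\text{in }\mathcal D'\big((0,T)\times\mathbb R^2\big),
\]
where ${\rm div}\,(\rho F_j)$ denotes the distributional divergence of the piecewise smooth field. This is the same cancellation that preserves \eqref{8} for the Cauchy problem, now read on the whole plane.

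Since a distribution with vanishing $t$--derivative is constant in $t$, I would conclude that ${\rm div}\,(\rho F_j)(t,\cdot)$ equals its value at $t=0$. I would then split it into a regular part --- the classical divergences ${\rm div}\,(\rho^\pm F_j^\pm)$ in $\Omega^\pm(t)$ --- and a singular part $[\rho F_{j{\rm N}}]\,\delta_{\Gamma(t)}$ carried by the front (the normal to $\Gamma$ being proportional to $(1,-\partial_2\varphi)$, so that the normal trace of $\rho F_j$ is proportional to $\rho F_{j{\rm N}}$). At $t=0$ the regular part vanishes by hypothesis \eqref{8} and the singular part by \eqref{RH6}, so ${\rm div}\,(\rho F_j)(0,\cdot)=0$; hence ${\rm div}\,(\rho F_j)(t,\cdot)=0$ for every $t\in[0,T]$, and reading off the regular part gives exactly \eqref{8'}.

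The computation is trivial once the antisymmetry is spotted; what I expect to need real care --- and is the main obstacle --- is that one cannot simply argue region by region. Indeed $\partial_t\,{\rm div}\,(\rho^\pm F_j^\pm)=0$ holds classically inside each $\Omega^\pm$, i.e. the constraint is frozen along the vertical lines $x=\text{const}$; but a moving shock (already a rectilinear one, where $\partial_t\varphi\neq0$) sweeps these speed--zero characteristics across the front, so points of $\Omega^+(t)$ need not connect to $\Omega^+(0)$ along them. It is the global distributional identity that couples the two sides correctly, and the delicate steps are exactly the interface bookkeeping: the regular/singular splitting across the moving $\Gamma(t)$ and the verification that the surface term carried by the shock is precisely the one annihilated by \eqref{RH6}.
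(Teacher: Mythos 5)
Your argument is circular at its first step, and this is where it fails. You claim that a smooth solution of problem \eqref{bc}--\eqref{indat} is a weak solution of the conservation laws \eqref{7}, so that the $F_j$-equation holds in $\mathcal{D}'$ across $\Gamma(t)$. For a piecewise-smooth function, that weak formulation is equivalent to two things: the Rankine--Hugoniot relations on $\Gamma(t)$ (your recovery of \eqref{RH4}, \eqref{RH5} from \eqref{bc} and \eqref{RH6} is correct) \emph{and} the classical conservation-law equations inside each $\Omega^{\pm}(t)$. The latter is exactly what is not available: the interior equations of problem \eqref{bc}--\eqref{indat} are the systems \eqref{21}, i.e.\ the symmetrized form \eqref{9}--\eqref{10}, and \eqref{9} was obtained from \eqref{7} by \emph{using} the constraints \eqref{8}. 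Off the constraint manifold the two forms differ: a classical solution of \eqref{21} satisfies \eqref{22}, and since ${\rm div}\,(a\otimes b-b\otimes a)={\rm curl}\,(a\times b)$, equation \eqref{22} deviates from the $F_j$-conservation law of \eqref{7} precisely by the term ${\rm div}\,(\rho^{\pm}F_j^{\pm})\,v^{\pm}$ --- the very quantity you are trying to prove vanishes. So asserting the weak formulation of \eqref{7} presupposes \eqref{8'}. (The antisymmetry cancellation is legitimate for smooth solutions of the Cauchy problem for \eqref{7}, but the free boundary problem here is posed for \eqref{21}.)

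If you redo your computation with the equation the solution actually satisfies, namely \eqref{22}, the antisymmetric flux still drops out under the extra divergence, but the leftover term survives: instead of $\partial_t\big({\rm div}_{\mathcal{D}'}(\rho F_j)\big)=0$ you obtain the transport equation $\partial_t u^{\pm}+{\rm div}\,(u^{\pm}v^{\pm})=0$ in each $\Omega^{\pm}(t)$ for $u^{\pm}={\rm div}\,(\rho^{\pm}F_j^{\pm})$, together with (from the singular part, using \eqref{RH4}--\eqref{RH6}) the trace relation $\big[\,u\,(v_{\rm N}-\partial_t\varphi)\,\big]=\mathfrak{m}\,[\,u/\rho\,]=0$ on $\Gamma(t)$. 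This also shows that your closing paragraph misidentifies the difficulty: inside each region the constraint is not ``frozen along vertical lines''; it is transported along particle paths. The paper's proof exploits exactly this structure: since $\mathfrak{m}\neq 0$ one may assume $v_{\rm N}^{\pm}|_{\Gamma}>\partial_t\varphi$, so particles cross $\Gamma$ from $\Omega^-$ to $\Omega^+$; then $u^-\equiv 0$ propagates from the initial data along characteristics on the outflow side, the trace relation forces $u^+|_{\Gamma}=0$, and every backward characteristic in $\Omega^+(t)$ ends either in $\Omega^+(0)$ or on $\Gamma$, giving $u^+\equiv 0$ (this is the argument from \cite{MZ} that the paper invokes). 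The fact that your scheme never uses $\mathfrak{m}\neq 0$ or the direction of the flow --- hypotheses essential to the result --- is the telltale symptom of the circularity.
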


\begin{proof}
Since $\mathfrak{m}\neq 0$, without loss of generality we may suppose that $v_{\rm N}^{\pm}|_{\Gamma}>\partial_t\varphi$.
It follows from the first and the last four equations of systems \eqref{21} that
\begin{equation}
\partial_t(\rho^{\pm} F_j^{\pm})+ {\rm curl}\, (\rho^{\pm} F_j^{\pm} \times v^{\pm} ) + {\rm div}\,(\rho^{\pm} F_j^{\pm})\,v^{\pm} =0\quad \mbox{in}\ \Omega^{\pm}(t).
\label{22}
\end{equation}
Using then \eqref{22} and $v_{\rm N}^{\pm}|_{\Gamma}>\partial_t\varphi$ and following literally the arguments from \cite{MZ} towards the proof of the divergence constraint ${\rm div}\, H=0$ for the magnetic field $H$ on both side of the MHD shock front, we get constraints \eqref{8'} for all $t\in [0,T]$.
\end{proof}

We can reduce the free boundary problem  \eqref{bc}--\eqref{indat} to that in the fixed domains $\mathbb{R}^2_{\pm}=\{\pm x_1>0,\ x_2\in\mathbb{R}\}$ by the simple change of variables
\begin{equation}
{x}'_1=x_1-\varphi (t,x_2).
\label{chv}
\end{equation}
Dropping primes, from systems \eqref{21} we obtain
\begin{equation}
A_0(U^{\pm})\partial_tU^{\pm}+\widetilde{A}_{1}(U^{\pm},\varphi )\partial_1U^{\pm}+A_2(U^{\pm} )\partial_2U^{\pm}=0\quad \mbox{for}\ x\in \mathbb{R}^2_{\pm},
\label{23}
\end{equation}
where $\tilde{A}_{1}$ is the so-called boundary matrix:
\[
\widetilde{A}_{1}=\widetilde{A}_{1}(U,\varphi )= A_1(U)-A_0(U)\partial_t\varphi -A_2(U)\partial_2\varphi  .
\]
The boundary conditions for \eqref{23} are \eqref{bc} on the line $x_1=0$.

It is well-known that the necessary condition for the well-posedness of the above problem is that the number of boundary conditions should be one unit greater than the number of incoming characteristics of the 1D counterparts (with $A_2=0$) of systems \eqref{23} for fixed (``frozen'') $U^{\pm}$ and $\varphi$ satisfying the boundary conditions (roughly speaking, one of the boundary conditions is needed for finding the unknown function $\varphi$). Since the number of incoming characteristics is defined by the number of positive/negative eigenvalues of the matrices $\left(A_0(U^{\pm})\right)^{-1}\widetilde{A}_{1}(U^{\pm},\varphi )|_{x_1=0}$, for shock waves (for them these matrices have no zero eigenvalues) this is equivalent to the Lax's $k$--shock conditions \cite{Lax}
\[
\lambda_{k-1}^- <\partial_t\varphi <\lambda_k^-,\quad \lambda_k^+ <\partial_t\varphi <\lambda_{k+1}^+
\]
for some fixed integer $k$, where for our case of system of seven equations $1\leq k\leq 7$ and $\lambda_j^\pm$ ($j=\overline{1,7}$) are the eigenvalues of the matrices
\[
A_{\rm N}^{\pm}:=\left(A_0(U^{\pm})\right)^{-1}\left( A_1(U^{\pm})-A_2(U^{\pm})\partial_2\varphi  \right)|_{x_1=0},
\]
with $U^{\pm}$ and $\varphi$ satisfying the boundary conditions \eqref{bc} on $x_1=0$. Moreover, $\lambda_j^\pm$ are numbered as
\[
\lambda_1^-\leq \ldots \leq\lambda_7^-,\quad \lambda_1^+\leq \ldots \leq\lambda_7^+,
\]
and we take $\lambda^-_0:=-|\partial_t\varphi |/2$ and $\lambda_8^+:=2|\partial_t\varphi |$.

By direct calculation we find the eigenvalues $\lambda_j^\pm$:
\begin{equation}
\left\{
\begin{array}{l}
 \lambda_1^{\pm}=v_{\rm N}^{\pm}-\sqrt{(c^{\pm})^2+(F_{1{\rm N}}^{\pm})^2+(F_{2{\rm N}}^{\pm})^2}\,,\quad \lambda_2^{\pm}=v_{\rm N}^{\pm}-\sqrt{(F_{1{\rm N}}^{\pm})^2+(F_{2{\rm N}}^{\pm})^2},\\[3pt]  \lambda_3^{\pm}=\lambda_4^{\pm}=\lambda_5^{\pm}=v_{\rm N}^{\pm}\,,\\[3pt]
 \lambda_6^{\pm}=v_{\rm N}^{\pm}+\sqrt{(F_{1{\rm N}}^{\pm})^2+(F_{2{\rm N}}^{\pm})^2}\,,\quad \lambda_7^{\pm}=v_{\rm N}^{\pm}+\sqrt{(c^{\pm})^2+(F_{1{\rm N}}^{\pm})^2+(F_{2{\rm N}}^{\pm})^2}\quad \mbox{on}\ x_1=0,
\end{array}
\right.
\label{24}
\end{equation}
where $c^{\pm} =1/\sqrt{\rho'(p^{\pm})}$ are the sound speeds ahead and behind of the shock. Without loss of generality we assume that $v_{\rm N}^{\pm}|_{x_1=0}>\partial_t\varphi$. Then, $k$--shocks with $k\geq 3$ are not realizable. By virtue of the first and the last two boundary conditions in \eqref{bc}, the inequalities $\lambda_2^->\partial_t\varphi$ and $\lambda_2^+<\partial_t\varphi$ appearing for $k=2$ contradict each other. That is, as in gas dynamics, only 1-shocks are possible:
\begin{equation}
\left\{
\begin{array}{l}
v_{\rm N}^--\partial_t\varphi >\sqrt{(c^-)^2+(F_{1{\rm N}}^{-})^2+(F_{2{\rm N}}^{-})^2}\,,\\[6pt]
 \sqrt{(F_{1{\rm N}}^{+})^2+(F_{2{\rm N}}^{+})^2}<v_N^+-\partial_t\varphi <\sqrt{(c^{+})^2+(F_{1{\rm N}}^{+})^2+(F_{2{\rm N}}^{+})^2}\quad
 \mbox{on}\ x_1=0.
\end{array}
\right.
\label{25}
\end{equation}
We note that 1--shocks are {\it extreme shocks} in the sense that ahead of the shock there are no incoming waves ($\lambda_j^->\partial_t\varphi $ for all $j=\overline{1,7}$). We see that the Lax conditions \eqref{25} guarantee the fulfilment of our assumption \eqref{m}.

\begin{remark}
{\rm
The second inequality in \eqref{25} implies $\mathfrak{M}>0$. Then, it follows from the second condition in \eqref{bc} that the compressibility conditions $[p]>0$ and $[\rho]>0$ are equivalent (exactly as for gas dynamical shock waves). At the same time, rarefaction 1-shocks are, in general, possible, as in isentropic gas dynamics. Recall that in full (non-isentropic) gas dynamics rarefaction Lax shocks are commonly (at least, for so-called normal gases) excluded by the physical entropy increase condition (see, e.g., \cite{BThand} and references therein). However, it should be also noted that in our case of isentropic elastodynamics, as in isentropic gas dynamics, the density balance $[\rho ]=0$ is prohibited by the Lax conditions \eqref{bc}. Indeed, $[\rho]=0$ implies $[p]=0$ and $[c]=0$. But, in view of the last two boundary conditions in \eqref{bc}, we see that in this case the first and third inequalities in \eqref{25} contradict each other.
}
\label{r1}
\end{remark}

\begin{remark}
{\rm
Clearly, for systems \eqref{23} with the boundary conditions \eqref{bc} on $x_1=0$ the ``curved'' divergence constraints \eqref{8'} take place:
\begin{equation}
{\rm div}\,(\rho^{\pm} \widetilde{F}_j^{\pm})=0\quad \mbox{in}\ \mathbb{R}^2_{\pm} \quad (j=1,2),
\label{8"}
\end{equation}
where $\widetilde{F}_j^{\pm}=(F_{j{\rm N}},F_{2j})$. In fact, we can prove \eqref{8"} without reference to Proposition \ref{p1} by using arguments similar to those from \cite{T09} applied to the problem for current-vortex sheets.
}
\end{remark}

\begin{remark}
{\rm
In view of the requirements $\det F^{\pm}\neq 0$, the expressions $(F_{1{\rm N}}^{\pm})^2+(F_{2{\rm N}}^{\pm})^2$ on the shock front appearing in \eqref{24} and \eqref{25} may not vanish: $(F_{1{\rm N}}^{\pm})^2+(F_{2{\rm N}}^{\pm})^2\neq 0$ on $\Gamma (t)$ (or at $x_1=0$, as in \eqref{24} and \eqref{25}, if we have already straightened the curved shock). As was already noted above, for smooth shock front solutions (smooth from both sides of the shock) the assumption $|\det F^{\pm}_{|t=0}|\geq \epsilon >0$ in $\Omega^{\pm} (0)$ on the initial data can guarantee that $\det F^{\pm}\neq 0$ in $\Omega^{\pm} (t)$ on a short time interval. At the same time, for vortex sheets studied in \cite{CHW1,CHW2,Hu} we have $F_{1{\rm N}}^{\pm}|_{\Gamma}=F_{2{\rm N}}^{\pm}|_{\Gamma}=0$ by definition of vortex sheets. This leads to the degeneracy $\det F^{\pm}_{|\Gamma}= 0$. As was noticed in \cite{CHW2}, the associated physical interpretation
of this degeneracy is still unclear. The same physical puzzle appears for the free boundary problem in compressible \cite{T18} and incompressible elastodynamics \cite{HW,LiWangZhang} with boundary conditions requiring that all columns of the deformation gradient are parallel to the boundary (in the compressible case the situation is better because we may have $\det F\neq 0$ inside the flow domain whereas $\det F (t,x)\rightarrow 0$ as $x$ tends to a point of the boundary $\Gamma$). Fortunately, for shock waves we have no such a degeneracy.
}
\label{det}
\end{remark}

\section{Constant coefficients linearized problem associated with recti\-linear shock waves}
\label{s3}

Without loss of generality we can consider the unperturbed rectilinear shock wave with the equation $x_1=0$. We consider a constant solution
$(U^+,U^-,\varphi) =(\widehat{U}^+,\widehat{U}^-,\hat{\varphi})$ of systems \eqref{23} and the boundary conditions \eqref{bc} associated with this shock wave:
\[
\widehat{U}^{\pm}=(\hat{p}^{\pm},\hat{v}^{\pm},\widehat{F}_1^{\pm},\widehat{F}_2^{\pm}), \quad \hat{\varphi} =0,\quad \hat{\rho}^{\pm}=\rho (\hat{p}^{\pm}),\quad \hat{c}^{\pm}=1/\sqrt{\rho'(\hat{p}^{\pm})},
\]
\[
\hat{v}^{\pm}=(\hat{v}_1^{\pm},\hat{v}_2^{\pm}),\quad
\widehat{F}_1^{\pm} =(\widehat{F}_{11}^{\pm},\widehat{F}_{21}^{\pm}), \quad \widehat{F}_2^{\pm} =(\widehat{F}_{12}^{\pm},\widehat{F}_{22}^{\pm}),
\]
where all the hat values are given constants. We assume that $\hat{v}_1^{\pm}>0$. In view of the third condition in \eqref{bc}, $\hat{v}_2^+=\hat{v}_2^-$ and we can choose a reference frame in which
$\hat{v}_2^+=\hat{v}_2^-=0$. The rest constants satisfy the relations
\begin{equation}
\frac{\hat{\rho}^+}{\hat{\rho}^-}=\frac{\hat{v}_1^-}{\hat{v}_1^+},\quad \frac{\hat{\rho}^+}{\hat{\rho}^-}\big\{ (\hat{v}_1^+)^2-\big((\widehat{F}_{11}^+)^2+(\widehat{F}_{12}^+)^2\big)\big\}[\hat{\rho}]=[\hat{p}],\quad \big[\widehat{F}_{2j}\big]=0,\quad \big[\hat{\rho}\widehat{F}_{1j}\big]=0
\label{sbc}
\end{equation}
following from \eqref{bc}, where $j=1,2$, $[\hat{\rho}]=\hat{\rho}^+-\hat{\rho}^-$, etc.

We also assume that the constant solution satisfies the Lax conditions \eqref{25}. For the constant solution behind of the shock they read:
\begin{equation}
M_1<M<M_*,
\label{Mach}
\end{equation}
where $M={\hat{v}_1^+}/{\hat{c}^+}$ is the downstream  Mach number,
\[
M_1=\sqrt{\mathcal{F}_{11}^2+\mathcal{F}_{12}^2},\quad M_*=\sqrt{1+\mathcal{F}_{11}^2+\mathcal{F}_{12}^2},
\]
and $\mathcal{F}_{ij}=\widehat{F}_{ij}^+/\hat{c}^+$ are the components of the unperturbed scaled deformation gradient $\mathcal{F}=(\mathcal{F}_{ij})_{i,j=1,2}$ behind of the shock. Introducing the upstream Mach number $M_-={\hat{v}_1^-}/{\hat{c}^-}$ and using the jump conditions \eqref{sbc}, for the constant solution we also write down  the first Lax condition in \eqref{25}:
\begin{equation}
M_->\frac{M}{\sqrt{M^2-M_1^2}}.
\label{Mach-}
\end{equation}

In view of the first inequality in \eqref{25} written for the constant solution ahead of the shock, all the characteristics of the linear system
\[
A_0(\widehat{U}^-)\partial_t(\delta U^-)+\widetilde{A}_{1}(\widehat{U}^-,0 )\partial_1(\delta U^-)+A_2(\widehat{U}^- )\partial_2(\delta U^-)=0\quad \mbox{for}\ x\in \mathbb{R}^2_-
\]
for the perturbation $\delta U^-$ ahead of the shock are outgoing (as we already noticed above, our shock wave is an extreme shock), i.e., this linear system does not need any any boundary conditions. Hence, as is known, without loss of generality we may assume that $\delta U^-= 0$. Linearizing system \eqref{23} behind of the shock about the constant solution, we obtain the linear constant coefficients system
\begin{equation}
A_0(\widehat{U}^+)\partial_t(\delta U^+)+\widetilde{A}_{1}(\widehat{U}^+,0 )\partial_1(\delta U^+)+A_2(\widehat{U}^+ )\partial_2(\delta U^+)=0\quad \mbox{for}\ x\in \mathbb{R}^2_+
\label{ls}
\end{equation}
for the perturbation $\delta U^+=(\delta p^+,\delta v^+,\delta F_1^+,\delta F_2^+)$, where $\delta v^+=(\delta v^+_1,\delta v^+_2)$, etc.

For the forthcoming analysis of the linearized problem, it is convenient to reduce it to a dimensionless form by introducing the following scaled values:
\[
x'=\frac{x}{l},\quad t'=\frac{\hat{v}_1^+t}{l},\quad p=\frac{\delta p^+}{\hat{\rho}^+(\hat{c}^+)^2},\quad v=\frac{\delta v^+}{\hat{v}_1^+},\quad F_{ij}=\frac{\delta F_{ij}^+}{\hat{c}^+},\quad  \varphi =\frac{\delta\varphi}{l},
\]
where $l$ is a typical length and $\delta\varphi$ is the perturbation of the rectilinear shock wave. Recall that $\hat{v}_1^+>0$. We also assume that $\hat{c}^+>0$, cf. \eqref{11}. After dropping the primes and taking into account that $\hat{v}_2^+=0$, system \eqref{ls} is rewritten as the following linear system for the scaled perturbation $U=(p,v,F_1,F_2)$ behind of the shock wave:
\begin{alignat}{3}
&Lp+{\rm div}\,v=0, &  \label{ls1}\\
& M^2Lv+\nabla p -(\mathcal{F}_1\cdot\nabla )F_1-(\mathcal{F}_2\cdot\nabla )F_2=0, & \label{ls2}\\
& LF_1- (\mathcal{F}_1\cdot\nabla )v=0,&  \label{ls3}\\
&  LF_2- (\mathcal{F}_2\cdot\nabla )v=0 &\quad \mbox{for}\ x\in \mathbb{R}^2_+,\label{ls4}
\end{alignat}
where $L =\partial_t+\partial_1$, $\mathcal{F}_1=(\mathcal{F}_{11},\mathcal{F}_{21})$ and $\mathcal{F}_2=(\mathcal{F}_{12},\mathcal{F}_{22})$. For more technical simplicity of forthcoming arguments, here we do not introduce source terms (given right-hand parts) and consider homogeneous interior equations. For the forthcoming analysis it will be also useful to rewrite system \eqref{ls1}--\eqref{ls4} in the matrix form
\begin{equation}\label{lsm}
\mathcal{A}_0\partial_tU+\mathcal{A}_1\partial_1U+\mathcal{A}_2\partial_2U=0\quad\mbox{for}\ x\in \mathbb{R}^2_+,
\end{equation}
where $\mathcal{A}_0={\rm diag}\,(1,M^2I_2,I_4)$,
\[
\mathcal{A}_1=\begin{pmatrix}
1 & e_1 & \underline{0} & \underline{0}  \\[7pt]
e_1^{\top}&M^2I_2 &  -\mathcal{F}_{11}I_2 & - \mathcal{F}_{12}I_2  \\[3pt]
\underline{0}^{\top} &- \mathcal{F}_{11}I_2 & I_2 & O_2 \\
\underline{0}^{\top} &- \mathcal{F}_{12}I_2 & O_2 & I_2
\end{pmatrix},\quad
\mathcal{A}_2=\begin{pmatrix}
0 & e_2 & \underline{0} & \underline{0}  \\[7pt]
e_2^{\top}&O_2 & - \mathcal{F}_{21}I_2 & - \mathcal{F}_{22}I_2  \\[3pt]
\underline{0}^{\top} &- \mathcal{F}_{21}I_2 & O_2 & O_2 \\
\underline{0}^{\top} &- \mathcal{F}_{22}I_2 & O_2 & O_2
\end{pmatrix}.
\]

We now linearize the boundary conditions \eqref{bc} on $x_1=0$. Taking into account \eqref{sbc}, $\delta U^-= 0$ and omitting technical calculations, we get the following linearized boundary conditions for system \eqref{ls1}--\eqref{ls4} written in a dimensionless form:
\begin{equation}\label{lbc}
\left\{
\begin{array}{ll}
{\displaystyle v_1 +d_0p -\frac{\ell_0}{M^2R}\,v_2=0}, &\\[9pt]
 a_0p+(1-R)\partial_{\star}\varphi =0,\qquad
v_2 +(1-R)\partial_2\varphi= 0, & \\[9pt]
{\displaystyle F_{11}+{\cal F}_{11}\,p-\frac{{\cal F}_{21}}{R}\,v_2=0,} \qquad
{\displaystyle F_{12}+{\cal F}_{12}\,p-\frac{{\cal F}_{22}}{R}\,v_2=0,}  & \\[9pt]
F_{21}-{\cal F}_{11}\,v_2=0, \qquad   F_{22}-{\cal F}_{12}\,v_2=0 & \ \mbox{on}\ x_1=0,
\end{array}
\right.
\end{equation}
where
\[
d_0=\frac{M_*^2+M^2}{2M^2},\quad R=\frac{\hat{\rho}^+}{\hat{\rho}^-},\quad \ell_0={\cal F}_{11}{\cal F}_{21}+{\cal F}_{12}{\cal F}_{22},\quad  a_0=-\frac{\beta^2R}{2M^2},
\]
\[
\beta =\sqrt{M_*^2-M^2}\quad (\mbox{cf.}\ \eqref{Mach}),\quad \partial_{\star}=\partial_t-\frac{\ell_0}{M^2}\,\partial_2.
\]
We note that $R\neq 1$ (see Remark \ref{r1}).
Again for technical simplicity, we consider homogeneous boundary conditions. Thus, our constant coefficients linearized problem is \eqref{lsm}, \eqref{lbc} with the initial data
\begin{equation}
{U} (0,{x})={U}_0({x}),\quad {x}\in \mathbb{R}^2,\quad \varphi (0,{x}_2)=\varphi _0({x}_2),\quad {x}_2\in\mathbb{R}.\label{lindat}
\end{equation}
Note that by cross differentiation the perturbation $\varphi$ of the shock front can be excluded from the boundary conditions \eqref{lbc}:
\begin{equation}\label{lbc1}
\partial_{\star}v_2=a_0\partial_2p\quad \mbox{on}\ x_1=0.
\end{equation}

We now prove a ``linearized'' version of Proposition \ref{p1}.

\begin{proposition}
Suppose that problem \eqref{lsm}--\eqref{lindat} has a smooth solution $(U,\varphi)$ for $t\in [0,T]$.  Then, if the initial data \eqref{lindat} satisfy
\begin{equation}\label{8l}
{\rm div}\,F_j +(\mathcal{F}_j\cdot\nabla p)=0\quad \mbox{for}\ x \in\mathbb{R}_2^+ \quad (j=1,2),
\end{equation}
then \eqref{8l} holds for all $t\in [0,T]$.
\label{p2}
\end{proposition}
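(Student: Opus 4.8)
The plan is to show that each scalar quantity $\xi_j:={\rm div}\,F_j+(\mathcal{F}_j\cdot\nabla p)$, $j=1,2$, solves a homogeneous scalar transport equation in $\mathbb{R}^2_+$, and that its only data---the initial trace on $\{t=0\}$ and the boundary trace on $\{x_1=0\}$---both vanish; this forces $\xi_j\equiv0$, which is precisely \eqref{8l}.

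First I would derive the interior equation for $\xi_j$. Since $L=\partial_t+\partial_1$ has constant coefficients and commutes with $\partial_1$ and $\partial_2$, taking the divergence of \eqref{ls3} (resp.\ \eqref{ls4}) yields $L({\rm div}\,F_j)=(\mathcal{F}_j\cdot\nabla)({\rm div}\,v)$, while applying $(\mathcal{F}_j\cdot\nabla)$ to \eqref{ls1} yields $L(\mathcal{F}_j\cdot\nabla p)=-(\mathcal{F}_j\cdot\nabla)({\rm div}\,v)$. Adding these, the right-hand sides cancel and
\[
(\partial_t+\partial_1)\xi_j=0\qquad\text{in }\mathbb{R}^2_+,\quad j=1,2.
\]
The characteristics are the lines $\{x_1-t=\text{const}\}$, which enter the half-space through $\{x_1=0\}$; hence $\xi_j$ is determined by its values at $t=0$, which vanish by the hypothesis, together with its trace on $\{x_1=0\}$.

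The crux is therefore to prove $\xi_j|_{x_1=0}=0$. Because $\xi_j$ contains the normal derivatives $\partial_1F_{1j}$ and $\partial_1p$ that the tangential boundary conditions \eqref{lbc} do not see directly, I would first use the interior equations to trade them for tangential derivatives: \eqref{ls3}--\eqref{ls4} give $\partial_1F_{1j}$ and \eqref{ls1} gives $\partial_1p$, and after substitution the $\partial_1v_1$ terms cancel, leaving $\xi_j|_{x_1=0}=-\partial_t(F_{1j}+\mathcal{F}_{1j}p)+\mathcal{F}_{2j}\partial_2v_1+\partial_2F_{2j}-\mathcal{F}_{1j}\partial_2v_2+\mathcal{F}_{2j}\partial_2p$. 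Inserting the boundary relations for $F_{1j}$ and $F_{2j}$ from \eqref{lbc} collapses this to $\mathcal{F}_{2j}\bigl(-\tfrac1R\partial_tv_2+\partial_2v_1+\partial_2p\bigr)$; eliminating $\partial_2v_1$ through the first condition in \eqref{lbc} and then using the cross-differentiated condition \eqref{lbc1}, $\partial_\star v_2=a_0\partial_2p$, reduces the bracket to $\bigl(-a_0/R+1-d_0\bigr)\partial_2p$. It then remains only to verify the algebraic identity $-a_0/R+1-d_0=0$, which holds exactly because $a_0=-\beta^2R/(2M^2)$, $d_0=(M_*^2+M^2)/(2M^2)$ and $\beta^2=M_*^2-M^2$. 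Hence $\xi_j|_{x_1=0}=0$.

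With $(\partial_t+\partial_1)\xi_j=0$, $\xi_j|_{t=0}=0$ and $\xi_j|_{x_1=0}=0$, integrating along characteristics (equivalently, the energy identity $\tfrac{d}{dt}\int_{\mathbb{R}^2_+}\xi_j^2\,dx=\int_{\mathbb{R}}\xi_j^2|_{x_1=0}\,dx_2=0$) gives $\xi_j\equiv0$ on $[0,T]$, i.e.\ \eqref{8l}. I expect the boundary-trace step to be the real obstacle: the transport is inflow at $x_1=0$, so the vanishing of $\xi_j$ there is not automatic but follows only after combining all of \eqref{lbc} and the differentiated relation \eqref{lbc1} with the interior equations, the final cancellation resting on the explicit values of the linearized shock coefficients.
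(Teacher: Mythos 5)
Your proposal is correct and follows essentially the same route as the paper: derive the transport equation $L\mathcal{G}_j=0$ from \eqref{ls1}, \eqref{ls3}, \eqref{ls4}, show the boundary trace $\mathcal{G}_j|_{x_1=0}$ vanishes by combining the interior equations evaluated on $x_1=0$ with the boundary conditions \eqref{lbc}, \eqref{lbc1}, and conclude uniqueness from the vanishing initial and boundary data. In fact you supply exactly the ``detailed computations'' the paper omits, including the key cancellation $-a_0/R+1-d_0=0$.
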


\begin{proof}
It follows from \eqref{ls1}, \eqref{ls3} and \eqref{ls4} that
\begin{equation}\label{gj}
{L}\mathcal{G}_j=0\quad \mbox{for}\ x \in\mathbb{R}_2^+ \quad (j=1,2),
\end{equation}
where $\mathcal{G}_j={\rm div}\,F_j +(\mathcal{F}_j\cdot\nabla p)$. Using the boundary conditions \eqref{lbc} and the interior equations \eqref{ls1}--\eqref{ls4} evaluated on $x_1=0$ and omitting detailed computations, we obtain
\begin{equation}\label{gjb}
 \mathcal{G}_j|_{x_1=0}=0, \quad j=1,2.
\end{equation}
Since $\mathcal{G}_j|_{t=0}=0$, problem \eqref{gj}, \eqref{gjb} has only solution $\mathcal{G}_j=0$.
\end{proof}

\section{Energy method for general deformations}
\label{s4}

We follow the idea of the energy method proposed by Blokhin \cite{Bl79} (see also \cite{BThand}) for gas dynamical shock waves. This idea is based on a symmetrization of the wave equation for $p$ (the perturbation of the pressure behind the shock) and the derivation of suitable boundary conditions for the obtained symmetric hyperbolic system for the second-order derivatives of $p$. From system \eqref{ls1}--\eqref{ls4} it is also possible to deduce a separate second-order equation for $p$. Indeed, applying the operator $M^2{L}$ to \eqref{ls1} and the div operator to \eqref{ls2}, after subtracting the results and using \eqref{8l} we obtain the hyperbolic equation
\[
\left(M^2{L}^2- \Delta -(\mathcal{F}_1\cdot\nabla )^2-(\mathcal{F}_2\cdot\nabla )^2\right)p=0\quad \mbox{for}\ x \in\mathbb{R}_2^+
\]
whose canonical form is the wave equation
\begin{equation}\label{we}
(L_1^2-L_2^2-L_3^2)p=0\quad \mbox{for}\ x \in\mathbb{R}_2^+
\end{equation}
in terms of the differential operators
\[
L_1=\frac{MM_*}{\beta^2}\,\partial_t-\frac{M\ell_0}{\beta^2M_*}\,\partial_2,\quad L_2=\partial_1-\frac{M^2}{\beta^2}\,\partial_t+\frac{\ell_0}{\beta^2}\,\partial_2,\quad
L_3=\frac{\sigma}{\beta M_*}\,\partial_2,
\]
where
\begin{equation}
\sigma = \sqrt{M_*^2(1+M_2^2)-\ell_0^2}=\sqrt{M_*^2+M_2^2+(\det \mathcal{F})^2}\quad \mbox{and}\quad M_2=\sqrt{\mathcal{F}_{21}^2+\mathcal{F}_{22}^2}.
\label{sigma}
\end{equation}

As in \cite{BThand}, we now symmetrize the wave equation \eqref{we}, i.e., we write down a symmetric hyperbolic system following from \eqref{we}. One can easily check that a sufficiently smooth solution of \eqref{we} satisfies the symmetric system
\begin{equation}
(B_0L_1-B_1L_2-B_2L_3)W =0 \quad\mbox{for}\ x \in\mathbb{R}_2^+\label{3.53}
\end{equation}
for the vector
\[
W=\begin{pmatrix} W_1 \\ W_2 \\  W_3\end{pmatrix}=\begin{pmatrix} L_1\widetilde{\nabla}p \\ L_2\widetilde{\nabla}p \\ L_3\widetilde{\nabla}p\end{pmatrix},
\]
where $\widetilde{{\nabla}}=(L_1,L_2,L_3)$,
\[
B_0=\left( \begin{array}{ccc} {\cal K} & {\cal L} & {{\cal M}} \\ {\cal L} & {\cal K} &
{\cal N} \\ {{\cal M}} & -{\cal N} & {\cal K} \end{array} \right),\quad B_1=\left(
\begin{array}{ccc} {\cal L} & {\cal K} & {\cal N} \\ {\cal K} & {\cal L} & {{\cal M}}
\\ -{\cal N} & {{\cal M}} & -{\cal L}
\end{array} \right),\quad
B_2=\left( \begin{array}{ccc} {{\cal M}} & -{\cal N} & {\cal K} \\ {\cal N} & -{{\cal
M}} & {\cal L} \\ {\cal K} & {\cal L} & {{\cal M}} \end{array} \right),
\]
and ${\cal K}$, ${\cal L}$, ${\cal M}$ and ${\cal N}$ are arbitrary symmetric matrices of order 3 whose final choice will be made below. Moreover, the matrices $B_0$, $B_1$ and $B_2$ can be written as follows:
\begin{equation}
\begin{split}
& B_0={\cal T}^{\top}\{ {I}_2{\otimes}{\cal H}\} {\cal T},\quad B_1={\cal
T}^{\top}\left\{ \left(
\begin{array}{cc} 0&-1\\-1&0 \end{array} \right){\otimes}{\cal H}\right\} {\cal T},\\
& B_2={\cal T}^{\top}\left\{ \left( \begin{array}{cc} -1&0\\0&1 \end{array}
\right){\otimes}{\cal H}\right\} {\cal T},
\end{split}\label{3.54}
\end{equation}
with
\begin{equation}
{\cal T}=\frac{1}{\sqrt{2}}\left( \begin{array}{ccc} 1&0&-1\\0&-1&0\\0&-1&0\\ 1&0&1
\end{array} \right){\otimes}{I}_3,\quad {\cal H}= \left( \begin{array}{cc} {\cal
K}-{{\cal M}} & -{\cal L}-{\cal N} \\ -{\cal L}+{\cal N} & {\cal K}+{{\cal M}}
\end{array} \right).
\label{H}
\end{equation}

Returning in \eqref{3.53} to $\partial_t$, $\partial_1$ and $\partial_2$, one gets the system
\begin{equation}
\widetilde{B}_0\partial_tW -B_1\partial_1W-\widetilde{B}_2\partial_2W =0\quad \mbox{for}\ x \in\mathbb{R}_2^+, \label{3.55}
\end{equation}
where
\[
\widetilde{B}_0=\frac{M}{\beta^2}(M_*B_0+MB_1)\quad\mbox{and}\quad \widetilde{B}_2=\frac{\sigma}{\beta M_*}B_2+\frac{M\ell_0}{\beta^2M_*}B_0+\frac{\ell_0}{\beta^2}B_1.
\]
In view of \eqref{3.54},
\[
\widetilde{B}_0=\frac{M}{\beta^2}{\cal T}^{\top}\left\{ \left( \begin{array}{cc} M_*&-M\\
     -M&M_*\end{array} \right){\otimes}{\cal H}\right\} {\cal T}.\label{3.55'}
\]
Thanks to the Lax condition $M<M_*$, cf. \eqref{Mach}, we have
\[
\left( \begin{array}{cc} M_*&-M\\
     -M&M_*\end{array} \right) >0
\]
and, hence, $\widetilde{B}_0>0$ provided that $\mathcal{H}>0$. That is, system \eqref{3.55} is symmetric hyperbolic under the condition $\mathcal{H}>0$.

We now deduce boundary conditions for system \eqref{3.55}.
Applying the vector differential operator $(M^2\partial_{\star},-\partial_{\star},0)$ to system \eqref{ls1}, \eqref{ls2}, evaluating the result on $x_1=0$ and using the boundary conditions \eqref{lbc}, \eqref{lbc1} and \eqref{gjb}, after some algebra we obtain the following second-order boundary condition for $p$:
\begin{equation}\label{bcp}
\begin{split}
M^2(1+d_0)\partial_t^2p -\beta^2\partial_t\partial_1p & +\left\{a_0\Big(M^2-M_1^2+\frac{M_2^2}{R}\Big)+\frac{2\ell_0^2}{M^2}\right\}\partial_2^2p\\[3pt] &
+\frac{\beta^2\ell_0}{M^2}\partial_1\partial_2p-\ell_0\Big(3+d_0+\frac{a_0}{R}\Big)\partial_t\partial_2p=0\quad\mbox{on}\ x_1=0.
\end{split}
\end{equation}
Using \eqref{we} evaluated on $x_1=0$ and omitting long but straightforward calculations, we rewrite \eqref{bcp} in terms of the differential operators $L_1$, $L_2$ and $L_3$:
\begin{equation}\label{bcp'}
%\begin{split}
L_1L_2p-M\widetilde{d}_0L_2^2p-\frac{M}{\beta^2}a_1L_3^2p+a_2(ML_1-M_*L_2)L_3p=0\quad\mbox{on}\ x_1=0,
%\end{split}
\end{equation}
where
\[
\widetilde{d}_0=\frac{d_0}{M_*},\quad a_1=\beta^2\widetilde{d}_0 +a_0\left(M^2-M_1^2+\frac{M_2^2}{R} \right)\frac{M_*^3}{\sigma^2}+
a_2^2M_*\left(\beta^2+\frac{M^2}{2}\right),\quad a_2=\frac{\ell_0\beta}{M_*M\sigma}.
\]

Again following \cite{BThand}, we complete \eqref{bcp'} by the wave equation \eqref{we} evaluated on $x_1=0$ and some trivial relation:
\[
\left\{
\begin{array}{l}
L_1^2p-L_2^2p-L_3^2p=0,\\ L_3L_2p-L_2L_3p=0,\\[3pt]
{\displaystyle L_1L_2p-M\widetilde{d}_0L_2^2p-\frac{M}{\beta^2}a_1L_3^2p+a_2(ML_1-M_*L_2)L_3p=0\quad\mbox{on}\ x_1=0.}
\end{array}
\right.
\]
The last system can be written in the matrix form
\begin{equation}
{\cal A}W_1+{\cal B}W_2+{\cal C}W_3=0\quad\mbox{on}\ x_1=0,
\label{3.56}
\end{equation}
where
\[
{\cal A}=\left( \begin{array}{ccc} 1 & {\alpha}& 0 \\ 0 &    0   & 0 \\ 0 & 1 & Ma_2
\end{array} \right), \quad {\cal B}=\left( \begin{array}{ccc} -{\alpha} & -1 & 0 \\
0  &  0 &-1 \\0 & -M\widetilde{d}_0 &-M_*a_2\end{array} \right),\quad
%\]
%\[
{\cal C}=\left( \begin{array}{ccc} 0 & 0 & -1 \\ 0 & 1 &    0    \\[2pt] 0 & 0 &
{\displaystyle -\frac{Ma_1}{{\beta}^2}} \end{array} \right),
\]
and ${\alpha} >1$ is some constant. For $a_2=0$ the structure of the matrices $\mathcal{A}$, $\mathcal{B}$ and $\mathcal{C}$ coincides with that in gas dynamics \cite{BThand}.

Let the 3-vectors $V_k$ ($k=\overline{1,4}$) be corresponding vector components of the vector $V\in\mathbb{R}^{12}$ defined as follows:
\[
V=\left( \begin{array}{c} V_{\rm I}
    \\ V_{\rm II} \end{array} \right) = {\cal T}V,\quad
V_{\rm I}=\left( \begin{array}{c}V_1 \\
V_2
\end{array} \right)\;,\quad V_{\rm II}=\left( \begin{array}{c}
V_3 \\ V_4 \end{array} \right).
\]
Since
\[
W_1=\frac{\sqrt{2}}{2}(V_1+
V_4)\;,\quad W_2=-\sqrt{2}V_2=
-\sqrt{2}V_3,\quad
W_3=\frac{\sqrt{2}}{2}(V_4-V_1),
\]
the boundary conditions (\ref{3.56}) can also be written as
\begin{equation}
V_{\rm I}=GV_{\rm II}\quad\mbox{on}\ x_1=0, \label{3.57}
\end{equation}
with
\begin{equation}
G=\left( \begin{array}{cc} G_1 & -G_2 \\ I_3 & 0 \end{array} \right)\; ,\quad G_1=2({\cal
A}-{\cal C})^{-1}{\cal B},\quad G_2=({\cal A}-{\cal C})^{-1} ({\cal A}+{\cal C}).
\label{G}
\end{equation}

Assuming that $\mathcal{H}>0$ (i.e., $\widetilde{B}_0>0$) and applying standard arguments of the energy method, for the symmetric hyperbolic system \eqref{3.55} we obtain  the energy identity
\begin{equation}
I(t)+\int\limits_{0}^{t}\int\limits_{{\mathbb R}} (B_1W \cdot W )|_{x_1=0}\,{\rm d} x_2 {\rm d}s=I(0)
\label{3.60}
\end{equation}
for $t\in [0,T]$ and $W\in C([0,T], L^2({\mathbb R}^2_+))$, with
\[
I(t)=\int\limits_{{\mathbb R}^2_+} (\widetilde{B}_0{W}\cdot {W}){\rm d} x.
\]
In view of  (\ref{3.54}) and (\ref{3.57}),
\[
(B_1{W}\cdot{W})|_{x_1=0}=(G_0V_{\rm II}\cdot V_{\rm II})|_{x_1=0},
\]
where
\begin{equation}
-G_0=G^{\top}{\cal H}+{\cal H}G. \label{3.58}
\end{equation}

We now consider \eqref{3.58} as the Lyapunov equation \cite{Bell} for finding $\mathcal{H}$ (recall that the symmetric matrices ${\cal K}$, ${\cal L}$, ${\cal M}$ and ${\cal N}$ appearing in the definition of $\mathcal{H}$ in \eqref{H} are arbitrary yet). As is known, if all the eigenvalues of the matrix $G$ lie strictly in the open left-half complex plane, then for any real symmetric positive definite matrix $G_0$ this equation has a unique real solution $\mathcal{H}$ which is again a symmetric matrix. Assume that the matrix $G$ in \eqref{G} has the mentioned property of its eigenvalues $\lambda _j(G)$:
\begin{equation}
\Real\lambda_j(G)<0\quad\mbox{for all}\  j=\overline{1,6}.
\label{eG}
\end{equation}
Referring the reader to \ref{appA} for technical computations, here we just write down the following necessary and sufficient condition for the fulfilment of property \eqref{eG}:
\begin{equation}
(M_*^2+M^2)\sigma^2-\left\{R(M^2-M_1^2)+M_2^2 \right\}M_*^4+\ell_0^2(2\beta^2+M^2)-2|\ell_0|\beta M\sigma >0.
\label{usc}
\end{equation}

Assuming that the unperturbed flow satisfies condition \eqref{usc}, we find the real symmetric matrix
\[
{\cal H}=\left( \begin{array}{cc} {\cal H}_1 & {\cal H}_2 \\ {\cal H}_2^{\,\top} & {\cal
H}_3 \end{array} \right) >0
\]
which is a unique solution of the Lyapunov equation \eqref{3.58}, where $\mathcal{H}_1$ and $\mathcal{H}_3$ are symmetric matrices. Having in hand the matrix $\mathcal{H}$, we then define the symmetric matrices ${\cal K}$, ${\cal L}$, ${\cal M}$ and ${\cal N}$:
\[
{\cal K}=\frac{1}{2}({\cal H}_1+{\cal H}_3), \quad {{{\cal M}}}=\frac{1}{2}({\cal
H}_3-{\cal H}_1),\quad
%\]
%\[
{\cal L}=-\frac{1}{2}({\cal H}_2+{\cal H}_2^{\,\top}), \quad {\cal N}=\frac{1}{2}({\cal
H}_2^{\,\top}-{\cal H}_2).
\]

We underline that if we consider equation \eqref{3.58} as an equation for finding $\mathcal{H}$ for a given matrix $G_0$, then the matrix $G_0$ is still an arbitrary real symmetric positive definite matrix. By changing $G_0$, we change the solution $\mathcal{H}$ and, hence, our choice of ${\cal K}$, ${\cal L}$, ${\cal M}$ and ${\cal N}$. At the present moment our important assumption is that the matrix $G_0>0$ and we will below choose how big should be its norm. Thanks to this assumption $(B_1{W}\cdot{W})|_{x_1=0}>0$. Moreover, since
\[
V_{\rm II}=\frac{\sqrt{2}}{2}\left(\begin{array}{c} -W_2 \\
W_1+W_3
\end{array} \right),
\]
then
\begin{equation}
\begin{split}
(B_1W\cdot W)|_{x_1=0} >C_1\big( & (L_1^2p)^2+(L_1L_2
p)^2+(L_1L_3p)^2 \\  & +(L_2^2p)^2+(L_2 L_3 p)^2+(L_3^2
p)^2 \big)\big|_{x_1=0}> C_2{\cal P}|_{x_1=0},
\end{split}
\label{3.61'}
\end{equation}
where
\[
{\cal
P}=(\partial_t^2p)^2+(\partial_t\partial_1p)^2+(\partial_t\partial_2p)^2+(\partial_1^2p)^2+(\partial_1\partial_2p)^2+(\partial_2^2p)^2
\]
is the sum of all second-order derivatives of $p$, and $C_{1}=C_{1}(G_0)>0$ and $C_{2}=C_{2}(G_0)>0$ are constants depending on the norm of the matrix $G_0$.

In fact, \eqref{3.61'} means that the boundary conditions \eqref{3.57} are {\it strictly dissipative}. Using this, from \eqref{3.60} we deduce the estimate
\begin{equation}
I(t)+C_2\int\limits_{0}^{t}\int\limits_{{\mathbb R}} {\cal P}|_{x_1=0}\,{\rm d} x_2 {\rm d}s\leq I(0)
\label{3.60c}
\end{equation}
giving us a control on not only the solution $W$ but also on its trace $W|_{x_1=0}$. However, the most important thing is that thanks to this strict dissipativity we can obtain the strict dissipativity of the boundary conditions for system \eqref{lsm} prolonged up to second-order derivatives of $U$ (see below).

For sufficiently smooth solutions, from system \eqref{lsm} we obtain the following system for all the second-order derivatives of $U$:
\begin{equation}\label{lsmp}
\mathfrak{A}_0\partial_t\mathcal{U}+\mathfrak{A}_1\partial_1\mathcal{U}+\mathfrak{A}_2\partial_2\mathcal{U}=0\quad\mbox{for}\ x\in \mathbb{R}^2_+,
\end{equation}
where
\[
\mathcal{U}=(\partial_t^2U,\partial_t\partial_1U,\partial_t\partial_2U,\partial_1^2U,\partial_1\partial_2U,\partial_2^2U)\quad\mbox{and}\quad
\mathfrak{A}_k=I_6\otimes \mathcal{A}_k\quad (k=0,1,2).
\]
The energy identity for \eqref{lsmp} reads
\begin{equation}
\mathcal{I}(t)-\int\limits_{0}^{t}\int\limits_{{\mathbb R}} (\mathfrak{A}_1\mathcal{U} \cdot \mathcal{U} )|_{x_1=0}\,{\rm d} x_2 {\rm d}s =\mathcal{I}(0),
\label{3.60p}
\end{equation}
with
\[
\mathcal{I}(t)=\int\limits_{{\mathbb R}^2_+} (\mathfrak{A}_0\mathcal{U}\cdot \mathcal{U}){\rm d} x.
\]

As in gas dynamics \cite{Bl79,BThand}, one can check that all the traces of the second-order derivatives of the components of $U$ can be expressed through the sum of the traces of the second-order derivatives of $p$. Indeed, using the boundary conditions \eqref{lbc} as well as the system \eqref{lsm} and its derivatives evaluated on $x_1=0$, taking into account that for shock waves the boundary matrix $\mathcal{A}_1$ is not singular, and omitting detailed arguments, we come to the inequality
\begin{equation}\label{PP}
(\mathfrak{A}_1\mathcal{U} \cdot \mathcal{U} )|_{x_1=0} \leq C_3{\cal P}|_{x_1=0},
\end{equation}
where $C_3>0$ is a constant depending on the coefficients of our linearized problem. It follows from \eqref{3.60p} and \eqref{PP} that
\begin{equation}
\mathcal{I}(t)-C_3\int\limits_{0}^{t}\int\limits_{{\mathbb R}} {\cal P}|_{x_1=0}\,{\rm d} x_2{\rm d}s \leq\mathcal{I}(0),
\label{3.60p'}
\end{equation}
and summing up \eqref{3.60c} and \eqref{3.60p'} gives
\begin{equation}
I(t)+\mathcal{I}(t)+C_4\int\limits_{0}^{t}\int\limits_{{\mathbb R}} {\cal P}|_{x_1=0}\,{\rm d} x_2{\rm d}s \leq I(0)+\mathcal{I}(0),
\label{3.60f}
\end{equation}
where the constant $C_4= C_2-C_3>0$ thanks to the choice of the matrix $G_0$ with a sufficiently big norm.

Combining \eqref{3.60f} with the elementary inequality
\[
\| U (t)\|^2_{L^2(\mathbb{R}^2_+)} \leq \| U \|^2_{L^2([0,t]\times \mathbb{R}^2_+)} +\| \partial_tU \|^2_{L^2([0,t]\times \mathbb{R}^2_+)} +
\| U (0)\|^2_{L^2(\mathbb{R}^2_+)}
\]
and using the positive definiteness of the matrices $\widetilde{B}_0$ and $\mathfrak{A}_0$, we get the energy inequality
\begin{equation}
\nt U(t)\nt^2_{H^2(\mathbb{R}^2_+)}+\int\limits_{0}^{t}\int\limits_{{\mathbb R}} {\cal P}|_{x_1=0}\,{\rm d} x_2{\rm d}s\leq C\Bigg\{ \| U_0\|^2_{H^2(\mathbb{R}^2_+)}+\int\limits_{0}^{t}\nt U(s)\nt^2_{H^2(\mathbb{R}^2_+)} {\rm d}s\Bigg\},
\label{enin}
\end{equation}
where
\[
\nt U(t)\nt^2_{H^2(\mathbb{R}^2_+)}:=\sum\limits_{j=0}^2\|\partial_t^jU(t)\|^2_{H^{2-j}(\mathbb{R}^2_+)}.
\]
Here and below $C$ is a positive constant that can change from line to line. Throwing away the positive boundary integral in the left-hand side of \eqref{enin} and applying then Gronwall's lemma, we obtain the energy a priori estimate
\[
\nt U(t)\nt_{H^2(\mathbb{R}^2_+)}\leq C\| U_0\|_{H^2(\mathbb{R}^2_+)}
\]
for $t\in [0,T]$.

Since we have a control on the trace $\mathcal{P}|_{x_1=0}$ in \eqref{enin} and since, as was noted above, we can express $|\mathcal{U}|_{x_1=0}|^2$ through $\mathcal{P}|_{x_1=0}$, we can finally derive the following a priori estimate (we omit simple arguments which, in particular, include the usage of the energy inequality \eqref{enin}, the trace theorem,  etc.):
\begin{equation}
\| U\|_{H^2([0,T]\times\mathbb{R}^2_+)}+ \| U_{|x_1=0}\|_{H^2([0,T]\times\mathbb{R})}\leq C\| U_0\|_{H^2(\mathbb{R}^2_+)},
\label{aprest}
\end{equation}
where $C$ depends on $T$. Moreover, since we have a control on the trace of the solution in \eqref{aprest}, by using the second and third boundary conditions in \eqref{lbc} (recall that $R\neq 1$, see Remark \ref{r1}), exactly as for gas dynamical shock waves in \cite{Bl79,BThand}, we can also estimate the front perturbation $\varphi$:
\[
\| U\|_{H^2([0,T]\times\mathbb{R}^2_+)}+ \| U_{|x_1=0}\|_{H^2([0,T]\times\mathbb{R})} + \| \varphi\|_{H^3([0,T]\times\mathbb{R})}
\leq C\left\{\| U_0\|_{H^2(\mathbb{R}^2_+)} +\| \varphi_0\|_{L^2(\mathbb{R})}\right\}.
\]

We have actually constructed a strictly dissipative 2-symmetrizer \cite{Tsiam} for problem \eqref{lsm}--\eqref{lindat}. In fact, referring to \cite{Tsiam} (or just directly revising arguments above), we can also write down an a priori estimate for the corresponding inhomogeneous problem, i.e., problem \eqref{lsm}--\eqref{lindat} with a given source term $f(t,x)\in \mathbb{R}^7$ in the right-hand side of the interior equations \eqref{lsm} and a given source term $g(t,x_2)\in \mathbb{R}^7$ in the right-hand side of the boundary conditions \eqref{lbc}. This estimate reads
\begin{equation}
\begin{split}
\| U\|_{H^2([0,T]\times\mathbb{R}^2_+)}+ & \| U_{|x_1=0}\|_{H^2([0,T]\times\mathbb{R})} + \| \varphi\|_{H^3([0,T]\times\mathbb{R})}\\ &
\leq C\left\{\| U_0\|_{H^2(\mathbb{R}^2_+)} +\| \varphi_0\|_{L^2(\mathbb{R})} +\| f\|_{H^2([0,T]\times\mathbb{R}^2_+)}
+\| g\|_{H^2([0,T]\times\mathbb{R})}\right\},
\end{split}
\label{aprest'}
\end{equation}
where $C>0$ depends on $T$ and the constant coefficients of the problem (the parameters of the unperturbed flow) and does not depend on the initial data and the source terms. Since estimate \eqref{aprest'} is an a priori estimate {\it without loss of derivatives} from the initial data and the source terms, the energy method above can be considered an indirect proof of the uniform Kreiss--Lopatinski condition \cite{Kreiss}, and condition \eqref{usc} is sufficient for uniform stability, i.e., sufficient for the fulfilment of the uniform Kreiss--Lopatinski condition. We have thus obtained the following stability result for shock waves in 2D elastodynamics.

\begin{theorem}
Let a rectilinear shock wave satisfies the Lax conditions \eqref{Mach} and \eqref{Mach-}. Let also it satisfies condition \eqref{usc}, i.e.,
\begin{equation}
\begin{split}
\big(1+&\mathcal{F}_{11}^2+\mathcal{F}_{12}^2+M^2\big)\left(1+(\mathcal{F}:\mathcal{F})+(\det \mathcal{F})^2\right) \\ & -\left\{R(M^2-\mathcal{F}_{11}^2-\mathcal{F}_{12}^2)+\mathcal{F}_{21}^2+\mathcal{F}_{22}^2 \right\}\left(1+\mathcal{F}_{11}^2+\mathcal{F}_{12}^2\right)^2 \\ &
+\left({\cal F}_{11}{\cal F}_{21}+{\cal F}_{12}{\cal F}_{22}\right)^2\left\{2(1+\mathcal{F}_{11}^2+\mathcal{F}_{12}^2)-M^2\right\} \\
&\qquad >2M\left|{\cal F}_{11}{\cal F}_{21}+{\cal F}_{12}{\cal F}_{22}\right|\sqrt{\left(1+\mathcal{F}_{11}^2+\mathcal{F}_{12}^2-M^2\right)
 \left(1+(\mathcal{F}:\mathcal{F})+(\det \mathcal{F})^2\right)} \,,
\end{split}
\label{usc'}
\end{equation}
where $M$ is the downstream Mach number, $R$ measures the competition between downstream and upstream densities and $\mathcal{F}=(\mathcal{F}_{ij})_{i,j=1,2}$ is the scaled deformation gradient behind of the shock. Then the a priori estimate \eqref{aprest'} holds for solutions of the corresponding inhomogeneous linearized problem  and the rectilinear shock wave is uniformly stable.
\label{t1}
\end{theorem}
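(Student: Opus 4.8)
The plan is to assemble the energy-method apparatus constructed throughout Section~\ref{s4} into the single a~priori estimate \eqref{aprest'} and then to invoke the standard equivalence between an estimate without loss of derivatives and the uniform Kreiss--Lopatinski condition. The backbone of the argument is the reduction, valid under the divergence constraint \eqref{8l} (preserved by Proposition~\ref{p2}), of the full linearized system \eqref{lsm} to the scalar wave equation \eqref{we} for the pressure perturbation $p$, which is what makes Blokhin's symmetrization available in the elastodynamic setting.

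First I would record the symmetric hyperbolic system \eqref{3.55} attached to \eqref{we}, whose symmetrizing blocks ${\cal K},{\cal L},{\cal M},{\cal N}$ are kept free, together with the boundary relation \eqref{3.57} carrying the matrix $G$ of \eqref{G}. The decisive algebraic step is to verify that the stability inequality \eqref{usc} (equivalently \eqref{usc'}) is precisely the requirement \eqref{eG} that all eigenvalues of $G$ lie in the open left half-plane; this is the content of the computation deferred to \ref{appA}. Granting \eqref{eG}, the Lyapunov equation \eqref{3.58} is uniquely solvable for every symmetric positive definite $G_0$, producing a symmetric $\mathcal{H}>0$, which in turn fixes ${\cal K},{\cal L},{\cal M},{\cal N}$ and, through the factorization \eqref{3.54} together with the Lax bound $M<M_*$, gives $\widetilde{B}_0>0$; thus \eqref{3.55} is symmetric hyperbolic and the boundary conditions are strictly dissipative, yielding the pointwise lower bound \eqref{3.61'} of $(B_1W\cdot W)|_{x_1=0}$ by the full collection ${\cal P}$ of second-order traces of $p$.

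With strict dissipativity secured, the energy identity \eqref{3.60} upgrades at once to \eqref{3.60c}, controlling $\int_0^t\!\int_{\mathbb R}{\cal P}|_{x_1=0}$. In parallel I would prolong \eqref{lsm} to the second-order system \eqref{lsmp} and write its energy identity \eqref{3.60p}; here the nonsingularity of the boundary matrix $\mathcal{A}_1$ for a shock lets me solve the boundary conditions \eqref{lbc}, their tangential derivatives, and \eqref{lsm} restricted to $x_1=0$ for the remaining second-order traces of $U$ in terms of those of $p$, which is \eqref{PP}. Summing \eqref{3.60c} and \eqref{3.60p'} produces \eqref{3.60f} with coefficient $C_4=C_2-C_3$; Gronwall's lemma, the positivity of $\widetilde{B}_0$ and $\mathfrak{A}_0$, and the trace theorem then convert this into \eqref{enin}, \eqref{aprest}, and finally the inhomogeneous estimate \eqref{aprest'}, the front $\varphi$ being recovered from the second and third conditions of \eqref{lbc} (where $R\neq1$). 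As \eqref{aprest'} loses no derivatives, it certifies the uniform Kreiss--Lopatinski condition, i.e.\ uniform stability.

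The main obstacle is twofold. The genuinely nontrivial algebraic point is showing that \eqref{usc'} coincides with \eqref{eG}: one must extract the spectral data of the $6\times6$ matrix $G$ assembled from ${\cal A},{\cal B},{\cal C}$ and reduce the resulting stability region to the explicit inequality, a computation in which the free parameter $\alpha>1$ and the quantities $a_1,a_2,\widetilde{d}_0,\sigma$ must be tracked with care. The second delicate point is the sign of $C_4=C_2-C_3$: since $C_2=C_2(G_0)$ may be enlarged by taking $\|G_0\|$ large whereas $C_3$ is fixed by the coefficients of \eqref{lsm}, the freedom in $G_0$ left open by the Lyapunov construction is exactly what lets the dissipation generated by the symmetrized wave equation dominate the boundary flux of the prolonged system \eqref{lsmp}. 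The remaining steps are routine energy-estimate bookkeeping.
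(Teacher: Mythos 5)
Your proposal is correct and follows essentially the same route as the paper: the Blokhin-type symmetrization of the wave equation for $p$, the Lyapunov equation tied to the eigenvalue condition \eqref{eG} (shown in \ref{appA} to be equivalent to \eqref{usc}), strict dissipativity \eqref{3.61'}, the prolonged system \eqref{lsmp} with \eqref{PP}, and the absorption $C_4=C_2-C_3>0$ via the freedom in choosing $\|G_0\|$ large, leading to \eqref{aprest'} and hence uniform stability. You have also correctly identified the two genuinely delicate points (the algebra behind \eqref{usc}$\Leftrightarrow$\eqref{eG} and the sign of $C_4$) exactly where the paper places its weight.
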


According to the results in \cite{M1,M2,Met} and their extension to hyperbolic symmetrizable systems with characteristics of variable multiplicities \cite{Kwon,MZ}, all uniformly stable shocks are {\it structurally stable}. Roughly speaking (we do not discuss regularity, compatibility conditions, etc.), this means that if the uniform Kreiss--Lopatinski condition holds at each point of the initial shock, then this shock exists locally in time. In other words, as soon as planar (or rectilinear for the 2D case) shock waves are uniformly stable according to the linear analysis with constant coefficients, we can make the conclusion about structural stability of corresponding curved shocks.
Strictly speaking, if uniform stability was established by spectral analysis, i.e., by the direct test of the uniform Kreiss--Lopatinski condition, then for the deduction of a priori estimates by Kreiss' symmetrizers technique \cite{Kreiss} one needs to check the fulfilment of either Majda\-'s block structure condition \cite{M1} or the hypotheses of a general variable-multiplicity stability framework introduced by M\'etivier and Zumbrun \cite{MZ}.

Since we have already derived an a priori estimate without loss of derivatives and since this a priori estimate \eqref{aprest'} was obtained by the construction of a strictly dissipative 2-symmetrizer \cite{Tsiam}, we do not need to check structural conditions from \cite{M1,MZ}. Referring to \cite{Tsiam}, we get the structural stability of shock waves for which condition \eqref{usc'} holds at each point of the initial shock front. However, we guess that, as the system of ideal compressible isentropic or non-isentropic MHD (see \cite{MZ,Kwon}), the system of compressible elastodynamics satisfies the conditions introduced by M\'etivier and Zumbrun \cite{MZ}. The proof of this is postponed to future research.

Setting formally $\mathcal{F}=0$ in \eqref{usc'}, we obtain the uniform stability condition
\begin{equation}
M^2(R-1)<1
\label{gas}
\end{equation}
found by Majda \cite{M1} (and written in our notations) for shock waves in isentropic gas dynamics. More precisely, condition \eqref{gas} should be also completed by the Lax conditions $M<1$  and $M_->1$ (they are  \eqref{Mach} and \eqref{Mach-} for $\mathcal{F}=0$). Condition \eqref{gas} is necessary and sufficient for uniform stability of Lax shock waves in isentropic gas dynamics. In the next section (see Proposition \ref{p3}), by the direct test of the uniform Kreiss--Lopatinski condition we prove that our stability condition \eqref{usc'} is not only sufficient but also necessary for uniform stability for the particular deformations: the case of {\it stretching} $\mathcal{F}_{12}=\mathcal{F}_{21}=0$ and the ``opposite'' case $\mathcal{F}_{11}=\mathcal{F}_{22}=0$ (for these cases $\ell_0={\cal F}_{11}{\cal F}_{21}+{\cal F}_{12}{\cal F}_{22}=0$).

For stretching, the stability condition \eqref{usc'} becomes
\begin{equation}\label{str}
1+{\cal F}_{11}^2 +M^2 -R(1+{\cal F}_{11}^2)(M^2-{\cal F}_{11}^2)+{\cal F}_{22}^2M^2>0.
\end{equation}
Introducing the ``elastic'' Mach number $\widetilde{M}=\sqrt{M^2-{\cal F}_{11}^2}>0$, we rewrite \eqref{str} as
\begin{equation}\label{str'}
\widetilde{M}^2(R-1)<1+\frac{{\cal F}_{11}^2(1-\widetilde{M}^2)+{\cal F}_{22}^2(\widetilde{M}^2+{\cal F}_{11}^2)}{1+{\cal F}_{11}^2}.
\end{equation}
As the Mach number in gas dynamics, the ``elastic'' Mach number $\widetilde{M}$ satisfies $0<\widetilde{M}<1$, cf. \eqref{Mach}. Moreover, the fraction in the right-hand side of \eqref{str'} is strictly positive ($\det\mathcal{F}={\cal F}_{11}{\cal F}_{22}\neq 0$). Therefore, comparing \eqref{gas} and \eqref{str'}, we see that inequality \eqref{str'} for  $\widetilde{M}$ is less restrictive than inequality \eqref{gas} for $M$. In this sense, we make the conclusion that the elastic force plays a {\it stabilizing role} in the stability of shock wave that is presumably clear from the physical point of view. Clearly, the same is true for the particular deformation with $\mathcal{F}_{11}=\mathcal{F}_{22}=0$ for which condition \eqref{usc'} reads
\begin{equation}\label{opp}
1+{\cal F}_{12}^2 +M^2 -R(1+{\cal F}_{12}^2)(M^2-{\cal F}_{12}^2)+{\cal F}_{21}^2M^2>0
\end{equation}
(\eqref{opp} coincides with  \eqref{str} if we replace ${\cal F}_{11}$ and ${\cal F}_{22}$ with ${\cal F}_{12}$ and ${\cal F}_{21}$ respectively).

In fact, even {\it for general deformations} one can show that the elastic force plays a stabilizing role. Since all compressive shocks in isentropic gas dynamics were proved by Majda  \cite{M1} to be uniformly stable for convex equations of state $p=p(\rho )$, it is almost obvious that the same is true for shock waves in elastodynamics. We first get the following auxiliary result.

\begin{lemma}
Let $p(\rho )$ be a convex function of $\rho$. Then all compressive shock waves satisfy the Lax conditions \eqref{Mach} and \eqref{Mach-} as well as  the  ``elastic'' counterpart
\begin{equation}
\widetilde{M}^2(R-1)<1
\label{gas'}
\end{equation}
of condition \eqref{gas}, where $\widetilde{M}=\sqrt{M^2-{\cal F}_{11}^2-{\cal F}_{12}^2}$ is the ``elastic'' Mach number ($0<\widetilde{M}<1$).
\label{l1}
\end{lemma}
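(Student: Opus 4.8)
The plan is to reduce all three assertions to the single scalar Rankine--Hugoniot relation contained in \eqref{sbc}, combined with two elementary convexity facts for $p=p(\rho)$: for $\hat\rho^-<\hat\rho^+$ the secant slope $[p]/[\rho]$ lies between the endpoint tangent slopes, $p'(\hat\rho^-)\le [p]/[\rho]\le p'(\hat\rho^+)$, and the tangent at $\hat\rho^+$ stays below the graph, so $[p]\le p'(\hat\rho^+)[\rho]$. Recall also $(\hat c^\pm)^2=1/\rho'(\hat p^\pm)=p'(\hat\rho^\pm)$, which is positive by \eqref{11}. The crucial observation, which makes the elastic case mimic isentropic gas dynamics, is that the middle relation in \eqref{sbc} can be rewritten as the ``elastic Hugoniot''
\[
[p]=R\,\widetilde{M}^2(\hat c^+)^2[\rho],\qquad \widetilde{M}^2=M^2-M_1^2=\bigl((\hat v_1^+)^2-(\widehat F_{11}^+)^2-(\widehat F_{12}^+)^2\bigr)/(\hat c^+)^2,
\]
formally identical to the gas-dynamical Hugoniot with $M$ replaced by $\widetilde{M}$.

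First I would establish \eqref{Mach}, i.e.\ $0<\widetilde{M}<1$. Compressivity $[\rho]>0$ together with $p'>0$ gives $[p]>0$; since $R>0$, the elastic Hugoniot forces $\widetilde{M}^2(\hat c^+)^2=[p]/(R[\rho])>0$, hence $\widetilde{M}>0$ (equivalently $M>M_1$). For the upper bound I would write $\widetilde{M}^2(\hat c^+)^2=(\hat\rho^-/\hat\rho^+)\,[p]/[\rho]$ and bound it by $[p]/[\rho]\le p'(\hat\rho^+)=(\hat c^+)^2$ via convexity, the factor $\hat\rho^-/\hat\rho^+<1$ making the inequality strict; thus $\widetilde{M}<1$ (equivalently $M<M_*$).

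Next I would treat the upstream Lax condition \eqref{Mach-} through its equivalent original form, the first inequality in \eqref{25} written for the rectilinear state, $(\hat v_1^-)^2>(\hat c^-)^2+(\widehat F_{11}^-)^2+(\widehat F_{12}^-)^2$. Using $\hat v_1^-=R\hat v_1^+$ and $\widehat F_{1j}^-=R\widehat F_{1j}^+$ (the latter from $[\hat\rho\widehat F_{1j}]=0$ in \eqref{sbc}), and substituting $(\hat v_1^-)^2=R^2(\hat v_1^+)^2=R[p]/[\rho]+R^2\bigl((\widehat F_{11}^+)^2+(\widehat F_{12}^+)^2\bigr)$, the elastic terms on the two sides cancel and \eqref{Mach-} collapses to the purely gas-dynamical inequality $R\,[p]/[\rho]>p'(\hat\rho^-)=(\hat c^-)^2$. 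Since $R>1$ and $[p]/[\rho]\ge p'(\hat\rho^-)$ by convexity, this holds strictly. I expect this cancellation of the elastic contributions to be the one genuinely non-routine step; the rest is bookkeeping.

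Finally, for \eqref{gas'} I would compute, exactly as in the gas-dynamical case and using $(R-1)/R=[\rho]/\hat\rho^+$,
\[
\widetilde{M}^2(R-1)=\frac{R-1}{R}\,\frac{[p]}{[\rho](\hat c^+)^2}=\frac{[p]}{\hat\rho^+(\hat c^+)^2}=\frac{[p]}{\hat\rho^+\,p'(\hat\rho^+)}.
\]
The tangent-below-graph inequality $[p]\le p'(\hat\rho^+)[\rho]$ together with $[\rho]=\hat\rho^+-\hat\rho^-<\hat\rho^+$ then makes the right-hand side strictly less than $1$, which is \eqref{gas'}. Collecting the three estimates completes the proof; note that convexity is used in each of \eqref{Mach}, \eqref{Mach-} and \eqref{gas'}, while strictness always comes for free from the density ratio ($\hat\rho^-<\hat\rho^+$, $R>1$).
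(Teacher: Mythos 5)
Your proposal is correct and follows essentially the same route as the paper: your ``elastic Hugoniot'' $[p]=R\widetilde{M}^2(\hat c^+)^2[\rho]$ is exactly the paper's identity \eqref{R}, and all three conclusions are then drawn from the same two convexity facts, $p'(\hat\rho^-)\le[p]/[\rho]\le p'(\hat\rho^+)$, with strictness supplied by $R>1$. The only cosmetic differences are that you obtain $M>M_1$ from $[p]>0$ directly (the paper gets it as a by-product of the \eqref{Mach-} estimate $wM_-^2\ge M^2R>M^2$) and you route \eqref{Mach-} through the original Lax inequality in \eqref{25}, which collapses to the same inequality $R[p]/[\rho]>p'(\hat\rho^-)$ the paper uses.
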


\begin{proof}
We rewrite the second jump condition in \eqref{sbc} as
\begin{equation}\label{R}
Rw = \frac{[p(\hat{\rho})]}{p'(\hat{\rho}^+)[\hat{\rho}]},
\end{equation}
where $w=M^2-{\cal F}_{11}^2-{\cal F}_{12}^2$,  $[p(\hat{\rho})]=p(\hat{\rho}^+)-p(\hat{\rho}^-)$, and we now consider $p$ as a function of $\rho$. Since
$p(\rho )$ is convex,
\[
\frac{[p(\hat{\rho})]}{p'(\hat{\rho}^+)[\hat{\rho}]}\leq 1.
\]
Then, it follows from \eqref{R} that $Rw\leq 1$, and for compressive shocks ($R>1$) this implies $w< 1$. The second condition in \eqref{sbc} also reads
\[
wM_-^2=M^2R\frac{[p(\hat{\rho})]}{p'(\hat{\rho}^-)[\hat{\rho}]}.
\]
Thanks to the convexity of $p(\rho )$,
\[
\frac{[p(\hat{\rho})]}{p'(\hat{\rho}^-)[\hat{\rho}]}\geq 1.
\]
That is, $wM_-^2\geq M^2R>M^2>0$. This means that the Lax conditions $M>M_1$ (i.e., $w>0$) and \eqref{Mach-} (i.e., $M_-\sqrt{w}>M$) are satisfied. We may thus consider $\widetilde{M}=\sqrt{w} =\sqrt{M^2-{\cal F}_{11}^2-{\cal F}_{12}^2}$ as the ``elastic'' Mach number for general deformations, and we have proved that $0<\widetilde{M}<1$. At last, rewriting the  inequality $R\widetilde{M}^2\leq 1$ obtained above as
\[
\widetilde{M}^2(R-1)\leq 1-\frac{1}{R},
\]
we get \eqref{gas'} because $1-\frac{1}{R}<1$.
\end{proof}

Using Lemma \ref{l1}, we are now ready to prove the following theorem.

\begin{theorem}
All compressive shock waves in isentropic 2D elastodynamics with a convex equation of state $p=p(\rho )$, in particular, with the equation of state $p=A\rho^{\gamma}$ ($A>0$, $\gamma >1$) of a polytropic fluid  are structurally stable.
\label{t2}
\end{theorem}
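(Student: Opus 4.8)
The plan is to reduce Theorem \ref{t2} to an application of Theorem \ref{t1} together with Lemma \ref{l1}, so that the only real work is verifying that the uniform stability inequality \eqref{usc'} is automatically satisfied once we know the ``elastic'' estimate \eqref{gas'} holds. First I would invoke Lemma \ref{l1}: for a convex equation of state $p=p(\rho)$ every compressive shock ($R>1$) obeys the Lax conditions \eqref{Mach} and \eqref{Mach-} and the bound $\widetilde{M}^2(R-1)<1$, where $\widetilde{M}^2 = M^2-\mathcal{F}_{11}^2-\mathcal{F}_{12}^2 = M^2-M_1^2$. In particular the polytropic law $p=A\rho^\gamma$ with $A>0$, $\gamma>1$ is strictly convex, so it falls under this case. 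By Theorem \ref{t1}, structural stability will follow as soon as I show that \eqref{gas'} implies \eqref{usc'}.

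The heart of the argument is therefore the purely algebraic implication $\eqref{gas'}\Rightarrow\eqref{usc'}$. The clean way to organize this is to rewrite \eqref{usc} in terms of $\widetilde{M}$ and the scaled deformation data. The key identities are $M^2-M_1^2 = \widetilde{M}^2$, $\beta^2 = M_*^2-M^2 = 1-\widetilde{M}^2>0$, and $\sigma^2 = M_*^2+M_2^2+(\det\mathcal{F})^2$ from \eqref{sigma}. Writing \eqref{gas'} as $R\widetilde{M}^2 < 1 + \widetilde{M}^2$, i.e.\ $R(M^2-M_1^2) < 1+\widetilde{M}^2$, I would substitute the upper bound $R(M^2-M_1^2)$ into the negative term of \eqref{usc}, namely $-\{R(M^2-M_1^2)+M_2^2\}M_*^4$, to obtain a lower bound for the left-hand side of \eqref{usc} that no longer contains $R$. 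The goal is to show this $R$-free lower bound is still positive. Because the remaining terms $(M_*^2+M^2)\sigma^2$, $\ell_0^2(2\beta^2+M^2)$ and $-2|\ell_0|\beta M\sigma$ are manifestly of mixed sign only through the last one, I would group them so as to complete a square in $|\ell_0|$: the combination $\ell_0^2(2\beta^2+M^2) - 2|\ell_0|\beta M\sigma + (\text{something})$ should be recognizable as $(|\ell_0|\sqrt{2\beta^2+M^2}-\cdots)^2$ plus a nonnegative remainder, exploiting $\sigma^2\ge M_*^2 \ge M^2$ and $\beta^2=1-\widetilde{M}^2$.

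I expect the main obstacle to be precisely this last sign-definiteness check: after the $R$-elimination one is left with an inequality in the five real parameters $\widetilde{M}, \mathcal{F}_{11},\mathcal{F}_{12},\mathcal{F}_{21},\mathcal{F}_{22}$ (subject to $\det\mathcal{F}\neq 0$ and $0<\widetilde{M}<1$), and the cross term $-2|\ell_0|\beta M\sigma$ fights against the square terms. The natural bookkeeping is to treat the inequality as a quadratic in $|\ell_0|$ with positive leading coefficient $2\beta^2+M^2$ and check that its discriminant is negative, which reduces the problem to an inequality not involving $\ell_0$ at all. I would then verify that residual inequality using $\sigma^2 - M_*^2 = M_2^2+(\det\mathcal{F})^2\ge 0$ and the elementary estimate $\widetilde{M}^2<1$. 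A useful sanity check along the way is the degenerate specialization $\mathcal{F}=0$: there $\ell_0=0$, $\widetilde{M}=M$, and \eqref{usc'} must collapse to Majda's condition $M^2(R-1)<1$, which \eqref{gas'} indeed gives; and the stretching/symmetric cases $\ell_0=0$ treated in \eqref{str'} and \eqref{opp} provide a second check where the cross term vanishes and the square-completion is trivial. Once the discriminant inequality is confirmed, \eqref{usc'} holds for all compressive shocks, Theorem \ref{t1} applies pointwise on the initial front, and structural stability of the curved shock follows from the results of \cite{M1,M2,Met,MZ,Kwon} as invoked after Theorem \ref{t1}.
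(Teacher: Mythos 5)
Your high-level reduction is exactly the one the paper uses in Appendix~B: invoke Lemma~\ref{l1}, eliminate $R$ from \eqref{usc} via \eqref{gas'}, and show the resulting $R$-free quantity is positive. In fact your ``$R$-free lower bound'' coincides with the paper's quantity $\mathcal{D}$ from \eqref{DD}: since $\beta^2+M^2=M_*^2$ and $\sigma^2+\ell_0^2=M_*^2(1+M_2^2)$, the left-hand side of \eqref{usc} is identically equal to $M_*^4+(M\sigma-|\ell_0|\beta)^2-R\widetilde{M}^2M_*^4$, so replacing $R\widetilde{M}^2$ by its bound $1+\widetilde{M}^2$ leaves precisely $\mathcal{D}=(M\sigma-|\ell_0|\beta)^2-M_*^4\widetilde{M}^2$. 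Up to this point your argument is sound, and the theorem indeed rests entirely on proving $\mathcal{D}>0$.

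The gap is in how you propose to prove that positivity. You want to view the expression as a quadratic in $t=|\ell_0|$ with leading coefficient $2\beta^2+M^2$, holding $\sigma$ fixed, and show its discriminant is negative using only the relaxed constraints $\sigma^2\geq M_*^2$ and $\widetilde{M}^2<1$. This cannot work, because $\sigma$ and $\ell_0$ are rigidly coupled, $\sigma^2+\ell_0^2=M_*^2(1+M_2^2)$, equivalently $\ell_0^2=M_1^2M_2^2-(\det\mathcal{F})^2$, and once they are decoupled the claimed inequality is false. Concretely, take $M_1^2=M_2^2=4$, $(\det\mathcal{F})^2=1/100$, $\widetilde{M}^2=99/100$ (realizable, e.g.\ $\mathcal{F}_{11}=2$, $\mathcal{F}_{12}=0$, $\mathcal{F}_{22}=1/20$, $\mathcal{F}_{21}=\sqrt{4-1/400}$, with the Lax conditions satisfied since $M_1^2=4<M^2=4.99<M_*^2=5$). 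Then $\sigma^2=9.01\geq M_*^2$, $\ell_0^2=15.99$, and the constant term of your quadratic is
\[
(M_*^2+M^2)\sigma^2-\big(1+\widetilde{M}^2+M_2^2\big)M_*^4=(9.99)(9.01)-(5.99)(25)\approx -59.74<0,
\]
so the quadratic is already negative at $t=0$ and no discriminant (or completion-of-squares) argument treating $\sigma$ as independent can make it nonnegative for all $t\geq 0$; yet at the true, coupled value $t=|\ell_0|\approx 4.0$ one computes $\mathcal{D}\approx 15>0$. In other words, the positivity of $\mathcal{D}$ is not a consequence of the constraints you retain; it genuinely uses the coupling. This is exactly why the paper's proof proceeds differently: it first shows $M\sigma-|\ell_0|\beta>0$ via the identity $M^2\sigma^2-\ell_0^2\beta^2=M_*^2\big(M_2^2\widetilde{M}^2+M^2+(\det\mathcal{F})^2\big)$, reduces $\mathcal{D}>0$ to $M\sigma-|\ell_0|\beta>M_*^2\widetilde{M}$, squares twice (each time verifying the side being squared is positive), substitutes $\ell_0^2=M_1^2M_2^2-\varkappa^2$ with $\varkappa=\det\mathcal{F}$, and finally exhibits the resulting quadratic in $Z=\widetilde{M}^2$ as the manifestly positive sum $4\varkappa^2Z(1-Z)+\big((M_1^2+M_2^2)Z-(M_1^2+\varkappa^2)\big)^2+4M_2^2\varkappa^2Z$ on $Z\in(0,1)$. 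Your sign-definiteness step needs to be repaired along these lines; the remainder of your argument (Lemma~\ref{l1}, the $R$-elimination, and the appeal to Theorem~\ref{t1} for structural stability) is correct.
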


The proof of Theorem \ref{t2} is given in \ref{appB} and connected with a long chain of elementary manipulations with inequalities. In the very beginning of this proof we see that if the Lax conditions \eqref{Mach} and \eqref{Mach-} hold and a certain value
$\mathcal{D}$ is strictly positive (see \eqref{usc1} and \eqref{DD}), then the elastic force plays a stabilizing role. The rest of the proof is a {\it nontrivial} check that $\mathcal{D}>0$. Referring then to Lemma \ref{l1}, we conclude  that the stability condition \eqref{usc'} always holds for compressive shocks if the equation of state is convex. At last, as was already noted above, the reference to \cite{Tsiam} gives structural stability.

\section{Spectral analysis for particular deformations}
\label{s5}

For the reader's convenience, we write down here our linear constant coefficients stability problem \eqref{lsm}, \eqref{lbc} by taking into account the fact that the front perturbation $\varphi$ can be excluded from the boundary conditions \eqref{lbc} (see \eqref{lbc1}):
\begin{eqnarray}
 \mathcal{A}_0\partial_tU+\mathcal{A}_1\partial_1U+\mathcal{A}_2\partial_2U =0 & &\mbox{for}\ x\in \mathbb{R}^2_+, \label{71}\\
 \mathfrak{B}_0\partial_tU+\mathfrak{B}_2\partial_2U+\mathfrak{B}U =0 & &\mbox{on}\ x_1=0. \label{72}
\end{eqnarray}
where \eqref{72} is the matrix form of the boundary conditions
\[
\begin{split}
 v_1 +d_0p -\frac{\ell_0}{M^2R}\,v_2=0,\quad \partial_{\star}v_2-a_0\partial_2p=0,\quad
F_{11}+{\cal F}_{11}\,p-\frac{{\cal F}_{21}}{R}\,v_2=0, &  \\[6pt]
 F_{12}+{\cal F}_{12}\,p-\frac{{\cal F}_{22}}{R}\,v_2=0,\quad
F_{21}-{\cal F}_{11}\,v_2=0, \quad   F_{22}-{\cal F}_{12}\,v_2=0 & \quad\ \mbox{on}\ x_1=0
\end{split}
\]
(cf. \eqref{lbc}, \eqref{lbc1}), and the matrices $\mathfrak{B}_0$, $\mathfrak{B}_2$ and  $\mathfrak{B}$ of order $6\times 7$ can be easily written down. We should also complete \eqref{71}, \eqref{72} by initial data.

Applying the Fourier-Laplace transform to \eqref{71}, \eqref{72} (the Fourier transform with respect to $x_2$ and
the Laplace transform with respect to $t$), we obtain the following boundary-value
problem for a system of ODEs:
\begin{align}
& \frac{{\rm d}\widetilde{U}}{{\rm d} x_1}=\mathfrak{A}(s,{\omega} )\widetilde{ U}, \quad
 x_1>0,\label{3.1}\\
& (s\mathfrak{B}_0+i\omega\mathfrak{B}_2+\mathfrak{B})\widetilde{ U}|_{x_1=0}=0,\label{3.2}
\end{align}
where $\widetilde{U}=\widetilde{U}(x_1,s,{\omega})$ is the Fourier-Laplace transform
of $U (t,x )$;
\[
s=\eta +{i}\xi ,\quad \eta >0,\quad (\xi ,{\omega})\in \mathbb{R}^2,\quad \mathfrak{A}=\mathfrak{A}(s,\omega )=-\mathcal{A}_1^{-1}(s\mathcal{A}_0+{i}\omega \mathcal{A}_2).
\]
The eigenvalues $\lambda$ of the matrix $\mathfrak{A}$ obey the dispersion relation
\begin{equation}
\det (s\mathcal{A}_0+\lambda \mathcal{A}_1+{i}\omega \mathcal{A}_2)=0. \label{3.4}
\end{equation}
Since our shock waves are 1-shocks, the boundary matrix $\mathcal{A}_1$ has only one positive eigenvalue. Then, in view of Hersh's lemma \cite{Hersh}, for all $\eta >0$ and $(\xi ,{\omega})\in \mathbb{R}^2$ equation \eqref{3.4} has a {\it unique} solution $\lambda =\lambda^+=\lambda^+ (\eta ,\xi ,\omega )$ lying strictly in the open right-half complex plane ($\Real\lambda >0$).

We write down the following homogenous system of linear algebraic equations:
\begin{align}
& (s\mathcal{A}_0+\lambda^+ \mathcal{A}_1+{i}\omega \mathcal{A}_2)X =0\,,\label{3.5}
\\
& \big(\mathcal{A}_1\widetilde{U}_0\big)\cdot X=0,
\label{3.6}
\end{align}
where $\widetilde{U}_0=\widetilde{U}(0,s,\omega )$ is a vector satisfying the boundary conditions \eqref{3.2}. Since $\lambda^+$ is a simple eigenvalue, we can choose six linearly independent equations of system \eqref{3.5}. Adding them to equation \eqref{3.6}, we obtain for the vector $X$ the linear system
\begin{equation}
\mathfrak{L}X =0
\label{X}
\end{equation}
whose determinant is, in fact, the Lopatinski determinant (see below).
Following \cite{Tcmp}, we are now ready to give definitions of the Lopatinski condition and the uniform Lopatinski condition for problem \eqref{71}, \eqref{72}.  Such kind of definitions which can be used for hyperbolic problems having the 1-shock property \cite{Tcmp} are equivalent to Kreiss' classical  definitions \cite{Kreiss}.

\begin{definition}
Problem \eqref{71}, \eqref{72} satisfies the Lopatinski condition if the Lopatinski determinant $\det \mathfrak{L} (\eta
,\xi ,\omega ,\lambda^+ ) \neq 0$ for all $\eta >0$ and  $(\xi , \omega )\in
\mathbb{R}^2$.
\label{d3.2}
\end{definition}

\begin{definition}
Problem \eqref{71}, \eqref{72} satisfies the uniform Lopatinski condition if the Lopatinski determinant $\det \mathfrak{L} (\eta
,\xi ,\omega ,\lambda^+ ) \neq 0$ for all $\eta \geq 0$ and  $(\xi , \omega )\in
\mathbb{R}^2$ (with $\eta^2+\xi^2+{\omega}^2\neq 0$), where $\lambda^+ (0,\xi ,\omega )= \lim\limits_{\eta \rightarrow +0}
\lambda^+ (\eta ,\xi ,\omega )  $.
\label{d3.3}
\end{definition}

We first obtain the dispersion relation \eqref{3.4} (even for general deformations). Omitting straightforward calculations, we get the following polynomial equation for finding the unique $\lambda=\lambda^+$:
\begin{equation}
\Omega^3\left( M^2\Omega^2-\sigma_1^2-\sigma_2^2\right)\left( M^2\Omega^2-\sigma_1^2-\sigma_2^2-\lambda^2+\omega^2\right)=0,
\label{3.4'}
\end{equation}
with $\Omega =s+\lambda$, $\sigma_1=\mathcal{F}_{11}\lambda+i\omega\mathcal{F}_{21}$ and $\sigma_2=\mathcal{F}_{12}\lambda+i\omega\mathcal{F}_{22}$. Clearly, the solution $\lambda =-s$ of the equation $\Omega =0$ is not the desired $\lambda=\lambda^+$. But, actually, the same is true for the solutions of the equation
\[
M^2\Omega^2-\sigma_1^2-\sigma_2^2=0.
\]
Indeed, for $\omega =0$ its solutions are $\lambda=-Ms/(M- M_1)$ and $\lambda=-Ms/(M+ M_1)$. Referring again to Hersh's lemma \cite{Hersh}, we conclude that the last equation has no root $\lambda =\lambda^+$ also for all $\omega\neq 0$ (and $\eta >0$). That is, $\lambda^+$ is one of the two roots of the equation
\begin{equation}
 M^2\Omega^2-\sigma_1^2-\sigma_2^2-\lambda^2+\omega^2=0.
\label{3.4''}
\end{equation}

There is no technical problem to write down the algebraic system \eqref{X} for general deformations. However, for general deformations it is unfortunately impossible to study the equations $\det \mathfrak{L}=0$  and \eqref{3.4''} {\it analytically}, i.e., without computer calculations like those in \cite{Tcmp} which make sense only for concrete parameters of the unperturbed flow, in our case, for concrete numerical values of six dimensionless parameters $M$, $\mathcal{F}_{11}$, $\mathcal{F}_{12}$, $\mathcal{F}_{21}$, $\mathcal{F}_{22}$ and $R$. This is why from now on we restrict ourself to the case of stretching $\mathcal{F}_{12}=\mathcal{F}_{21}=0$. Then, the vector $\widetilde{U}_0$ being determined up to a nonzero factor can be taken in the form
\[
\widetilde{U}_0=\Big(s,-d_0s,ia_0\omega,-\mathcal{F}_{11}s,ia_0\mathcal{F}_{11}\omega, \frac{ia_0\mathcal{F}_{22}\omega}{R},0\Big),
\]
and we find that
\[
\mathcal{A}_1\widetilde{U}_0=\Big( (1-d_0)s,(1-M^2d_0+\mathcal{F}_{11}^2)s,ia_0(M^2-\mathcal{F}_{11}^2)\omega , (d_0-1)\mathcal{F}_{11}s,
0,\frac{ia_0\mathcal{F}_{22}\omega}{R},0\Big).
\]

Replacing the first line of the matrix $s\mathcal{A}_0+\lambda^+ \mathcal{A}_1+{i}\omega \mathcal{A}_2$ with the vector $\mathcal{A}_1\widetilde{U}_0$ considered  as a row-vector, we get the matrix $\mathfrak{L}$ (see \eqref{3.5}, \eqref{3.6}). Omitting technical calculations, we find that
\[
\det \mathfrak{L}=\frac{\beta^2\Omega^2(\omega^2-\lambda^2)}{2M^2}\big\{ (\lambda^2-\omega^2)s +M^2\Omega\lambda s +\mathcal{F}_{11}^2\lambda^2s + \mathcal{F}_{22}^2\omega^2\lambda +R(M^2-\mathcal{F}_{11}^2)\omega^2\Omega \big\},
\]
where $\lambda =\lambda^+$, i.e., $\lambda$ should be the solution of equation \eqref{3.4''} with the property $\Real\lambda >0$ for $\eta >0$. Since
$\Omega^2(\omega^2-\lambda^2)\neq 0$ for $\lambda =\lambda^+$, the equality $\det \mathfrak{L}=0$ is equivalent to
\begin{equation}
(\lambda^2-\omega^2)s +M^2\Omega\lambda s +\mathcal{F}_{11}^2\lambda^2s + \mathcal{F}_{22}^2\omega^2\lambda +R(M^2-\mathcal{F}_{11}^2)\omega^2\Omega =0.
\label{3.4'''}
\end{equation}
The test of the (uniform) Kreiss--Lopatinski condition is thus reduced to the study of solutions $(s,\lambda )$ of  system \eqref{3.4''}, \eqref{3.4'''} for all real $\omega$.

For $\omega =0$ it follows from \eqref{3.4''} that $\lambda=\lambda^+=Ms/(M_*-M)$ whereas \eqref{3.4'''} reads
\[
\lambda s\big(M^2s + (M^2+M_*^2)\lambda\big)=0.
\]
Substituting the above $\lambda^+$ into the last equation, we find the unique solution $s=0$. Hence, the Lopatinski condition holds whereas the uniform Lopatinski condition is also satisfied because the solution $s=0$ is prohibited by the requirement $\eta^2+\xi^2\neq 0$ (for $\omega =0$, see Definition \ref{d3.3}). That is, we may assume that $\omega \neq 0$. Moreover, since the left-hand sides in \eqref{3.4''} and \eqref{3.4'''} are homogeneous functions of $s$, $\lambda$ and $\omega$, without loss of generality we can suppose that $\omega =1$. Then system \eqref{3.4''}, \eqref{3.4'''} reads
\begin{align}
& M^2\Omega^2 -M_*^2\lambda^2+K_2=0, \label{1.}\\
& \Omega (M^2\lambda s +K) +(M_*^2\lambda^2  -K_2)s=0 ,\label{2.}
\end{align}
where $K=R(M^2-\mathcal{F}_{11}^2)+\mathcal{F}_{22}^2 >0$ and $K_2=1+\mathcal{F}_{22}^2$ and  (recall that for stretching $M_*=\sqrt{1+\mathcal{F}_{11}^2}$ and $\mathcal{F}_{11}^2<M^2 <M_*^2$, see \eqref{Mach}).

It follows from \eqref{1.} that $M_*^2\lambda^2-K_2 =M^2\Omega^2$. Substituting this into \eqref{2.}, we obtain
\begin{equation}
\Omega (M^2\Omega^2-M^2\lambda^2+K)=0.
\label{0.}
\end{equation}
Since $\Omega\neq 0$ for $\lambda =\lambda^+$ and $\eta >0$, \eqref{0.} is reduced to
\begin{equation}
M^2\Omega^2-M^2\lambda^2+K=0.
\label{3.}
\end{equation}
We can thus consider \eqref{1.}, \eqref{3.} instead of system \eqref{1.}, \eqref{2.}. Considering \eqref{1.}, \eqref{3.} as a linear system for $\Omega^2$ and $\lambda^2$ (recall that $\Omega= s+\lambda$), we find
\begin{equation}
\Omega^2= \frac{M_*^2}{M^2\widehat{M}^2} (K_1-K)\quad \mbox{and}\quad \lambda^2=\frac{1}{\widehat{M}^2}(K_2-K),
\label{4.}
\end{equation}
where $\widehat{M}=\sqrt{M_*^2-M^2}$ and $K_1=M^2K_2/M_*^2$. In view of \eqref{Mach}, $\widehat{M}>0$ and $K_1<K_2$.

We now analyse the behavior of the roots $(s,\lambda )$ of \eqref{4.} depending on the position of a point $K$ in comparison with that of the points $K_1$ and $K_2$ on the $K$-axis.  If $K\leq K_1$, then both $\Omega$ and $\lambda$ are real, and the requirement $\lambda>0$ implies
\[
\eta=
s=\Omega -\lambda =\frac{M_*}{M\widehat{M}}\sqrt{K_1-K}-\frac{1}{\widehat{M}}\sqrt{K_2-K}=\frac{1}{\widehat{M}}\left( \sqrt{K_2-K\frac{M_*^2}{M^2}}- \sqrt{K_2-K}\right) <0.
\]
That is, for $K\leq K_1$ not only the Lopatinski condition but also the uniform Lopatinski  condition holds (see Definitions \ref{d3.2} and \ref{d3.3}). In other words, the inequality $K\leq K_1$ describes a part of the uniform stability domain.

If $K_1<K<K_2$, then $\lambda$ is real and $\Real\Omega =0$. Taking $\lambda>0$, we have
$\Real\Omega = \eta +\lambda =0$ implying that $\eta =-\lambda <0$. Therefore, again not only the Lopatinski condition but also the uniform Lopatinski  condition holds. If $K\geq K_2$, then $\Real\lambda =\Real\Omega =0$ and, hence, $\Real s=0$, i.e., the Lopatinski condition holds. That is, we can already make the conclusion that no ill-posendess happens and for $K<K_2$  shock waves are uniformly stable. It remains to understand whether neutral stability (violation of the uniform Lopatinski condition) may happen for $K\geq K_2$.

Recall that $\lambda=\lambda^+$ is the solution of \eqref{1.} with $\Real \lambda >0$ for $\eta >0$. The dispersion relation \eqref{1.} has two roots
\[
\lambda^{\pm}=\frac{M^2s\pm\sqrt{M^2M_*^2s^2+K_2\widehat{M}^2}}{\widehat{M}^2}.
\]
Let $\lambda^{\pm}_{|\eta =+0}=i\delta^{\pm}$. Then
\[
\frac{\delta^{\pm}}{\xi}=\frac{1}{\widehat{M}^2}\left(M^2\pm\sqrt{M^2M_*^2-\frac{K_2\widehat{M}^2}{\xi^2}}\,\right)
\]
(we note that system \eqref{1.}, \eqref{3.} has no roots with $s=0$ and, hence, $\xi\neq 0$ for $\eta=+0$; recall that $s=\eta +i\xi$). We have
\begin{equation}\label{delta}
\frac{\delta^+}{\xi}\geq \frac{M^2}{{\widehat{M}}^2}
\end{equation}
that is equivalent to
\begin{equation}\label{delta+}
\frac{\xi}{\delta^+}\leq \frac{{\widehat{M}}^2}{M^2}.
\end{equation}

The fulfilment of the uniform Lopatinski condition is equivalent to the absence of solutions of system \eqref{1.}, \eqref{3.} for $\eta =+0$ corresponding to  $\delta =\delta^+$. The latter is equivalent to the violation of \eqref{delta+}, i.e., the roots $(s,\lambda )=(\eta +i\xi ,{\rm Re}\,\lambda +i\delta )$ of \eqref{1.}, \eqref{3.} should satisfy the inequality
%Moreover,  But, $K_2\widehat{M}^2/M_*^2<K_2$.
%Therefore, for $K\geq K_2$ the uniform Lopatinski condition holds if and only if the roots $(s,\lambda )$ of system \eqref{1.}, \eqref{3.} have the property
\[
\frac{\xi}{\delta}>\frac{{\widehat{M}}^2}{M^2}\quad \mbox{for}\quad \eta=+0 .
\]
The last inequality is rewritten as
\begin{equation}
\frac{\xi}{\delta}+1>\left(\frac{M_*}{M}\right)^2.
\label{5.}
\end{equation}

From  \eqref{1.} and \eqref{3.} we find
\[
\left.\frac{\Omega^2}{\lambda^2}\right|_{\eta =0}=\left( \frac{\xi}{\delta}+1\right)^2=\frac{M_*^2(K-K_1)}{M^2(K-K_2)}>0\quad \mbox{for}\quad K>K_2.
\]
Then, for $K> K_2$ inequality \eqref{5.} is equivalent to the inequality
\[
\frac{M_*^2(K-K_1)}{M^2(K-K_2)}>\left(\frac{M_*}{M}\right)^4
\]
which, in turn, can be shown to be equivalent to the inequality
\begin{equation}
K<K_1+K_2.
\label{6.}
\end{equation}

If $K=K_2$, then it follows from \eqref{4.} that
\[
\lambda =0\quad\mbox{and}\quad s^2=\frac{M_*^2}{M^2\widehat{M}^2} (K_1-K_2)<0.
\]
That is, for $K=K_2$ we have $\eta =0$ and $\lambda =0$. In particular, $\delta =0$, but we know that $\delta^+\neq 0$ (cf. \eqref{delta}). Hence, ${\rm Im}\,\lambda =\delta^-$ for $K=K_2$. This means that for $K=K_2$ the uniform Lopatinski condition holds. Summarizing the above, we make the conclusion that the uniform Lopatinski condition holds if and only if inequality \eqref{6.} is satisfied.

\begin{remark}
{\rm
The transition  $K=K_1+K_2$ from uniform stability to weak stability could be also found by analyzing the coincidence of the eigenvalues $\lambda ^+$ and $\lambda^-$ for $\eta =+0$. Indeed, using \eqref{4.}, one can show that $\delta^+=\delta^-=M^2\xi/\widehat{M}^2$ if only $K=K_1+K_2$. The interested reader can find more information about generic types of transitions in \cite{BRSZ}.
}
\end{remark}

One can show that \eqref{6.} is equivalent to condition \eqref{str} which is our structural stability condition \eqref{usc'} for $\mathcal{F}_{12}=\mathcal{F}_{21}=0$ found by the energy method. For the particular case $\mathcal{F}_{11}=\mathcal{F}_{22}=0$ all the arguments of spectral analysis are totally ``symmetric'' to the case of stretching and we obtain condition \eqref{opp} necessary and sufficient for  uniform stability (in the above arguments we should just replace ${\cal F}_{11}$ and ${\cal F}_{22}$ with ${\cal F}_{12}$ and ${\cal F}_{21}$ respectively). We have thus proved the following proposition.

\begin{proposition}
For the particular cases $\mathcal{F}_{12}=\mathcal{F}_{21}=0$ and $\mathcal{F}_{11}=\mathcal{F}_{22}=0$ the Lopatinski condition always holds, i.e., all rectilinear shock waves are, at least, weakly stable. Moreover, conditions  \eqref{str} and \eqref{opp} are necessary and sufficient for the uniform stability of rectilinear shocks for the particular cases $\mathcal{F}_{12}=\mathcal{F}_{21}=0$ and $\mathcal{F}_{11}=\mathcal{F}_{22}=0$ respectively.
\label{p3}
\end{proposition}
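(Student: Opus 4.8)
The plan is to assemble the spectral computation carried out above into a direct verification of the two conditions of Definitions \ref{d3.2} and \ref{d3.3}, treating the stretching case $\mathcal{F}_{12}=\mathcal{F}_{21}=0$ in full and then transferring the result to the case $\mathcal{F}_{11}=\mathcal{F}_{22}=0$ by the symmetry ${\cal F}_{11}\leftrightarrow{\cal F}_{12}$, ${\cal F}_{22}\leftrightarrow{\cal F}_{21}$. Since the Lopatinski determinant has already been factored and shown to vanish precisely on the reduced relation \eqref{3.4'''}, the whole question collapses to deciding, as a function of the parameters, whether the pair \eqref{3.4''}, \eqref{3.4'''} admits a root $(s,\lambda)$ with $\lambda=\lambda^+$ (that is, $\Real\lambda>0$ when $\eta>0$, and the corresponding limit as $\eta\to+0$ otherwise). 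After discarding $\omega=0$, where the unique solution $s=0$ is excluded by the constraint $\eta^2+\xi^2\neq0$, and normalizing $\omega=1$ by homogeneity, everything reduces to the linear system \eqref{1.}, \eqref{3.} in $\Omega^2$ and $\lambda^2$, whose explicit solution is \eqref{4.}.

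For the weak-stability claim I would run the trichotomy in the single parameter $K=R(M^2-\mathcal{F}_{11}^2)+\mathcal{F}_{22}^2$, using $K_1<K_2$ from \eqref{Mach}. When $K\le K_1$ both $\Omega$ and $\lambda$ are real and the normalization $\lambda>0$ forces $s=\eta=\Omega-\lambda<0$; when $K_1<K<K_2$ one has $\lambda$ real but $\Real\Omega=0$, again giving $\eta=-\lambda<0$; and when $K\ge K_2$ one finds $\Real\lambda=\Real\Omega=0$, hence $\Real s=0$. In all three regimes there is no root with $\eta>0$, so $\det\mathfrak{L}\neq0$ throughout $\eta>0$ and the Lopatinski condition holds unconditionally, which yields weak stability for every rectilinear shock.

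For the sharp uniform-stability criterion I would isolate the only regime that can carry a boundary root, $K\ge K_2$, and characterize failure of the uniform condition as the existence of a root at $\eta=+0$ bearing the \emph{selected} mode $\delta=\delta^+$. Using the two branches $\lambda^\pm$ of \eqref{1.} one obtains the selection bound \eqref{delta}, equivalently \eqref{delta+}, so uniform stability is equivalent to the strict reverse inequality \eqref{5.}; evaluating $\Omega^2/\lambda^2$ at $\eta=0$ from \eqref{1.}, \eqref{3.} turns \eqref{5.} into $K<K_1+K_2$, which is \eqref{6.}. The borderline $K=K_2$ must be inspected separately: there $\lambda=0$ and $s^2<0$, so $\delta=0$, and since $\delta^+\neq0$ by \eqref{delta} the limiting mode is in fact $\delta^-$, whence no violation occurs and $K=K_2$ remains in the uniform-stability region, consistent with $K_2<K_1+K_2$. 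It then remains to verify the purely algebraic equivalence of \eqref{6.} with the energy-method condition \eqref{str}, and to observe that the entire argument is invariant under the stated relabeling, delivering \eqref{opp} in the symmetric case.

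The step I expect to be most delicate is not the elimination leading to \eqref{4.} but the correct \emph{root selection}: one must be sure that the boundary value $\lambda^+(0,\xi,\omega)=\lim_{\eta\to+0}\lambda^+$ is the branch $\delta^+$ rather than $\delta^-$, since only a boundary root of the selected branch violates the uniform Lopatinski condition. This is exactly what \eqref{delta} encodes and what forces the separate treatment of $K=K_2$; pinning down the sign of $\xi/\delta$ and the correct branch is what converts the naive equality $\Omega^2/\lambda^2=(M_*/M)^4$ into the genuine strict threshold \eqref{6.}. The only other nonroutine piece is the bookkeeping behind $\eqref{6.}\Leftrightarrow\eqref{str}$, which is elementary but must be carried out to close the statement.
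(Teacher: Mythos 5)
Your proposal is correct and follows essentially the same route as the paper's own argument in Section \ref{s5}: the same factorization of the Lopatinski determinant reducing everything to the system \eqref{1.}, \eqref{3.} with explicit solution \eqref{4.}, the same trichotomy in $K$ relative to $K_1$ and $K_2$ for weak stability, the same branch-selection argument via \eqref{delta}--\eqref{5.} (including the separate inspection of $K=K_2$, where the limiting mode is $\delta^-$) yielding the threshold \eqref{6.}, and the same concluding equivalence with \eqref{str} plus the symmetric relabeling giving \eqref{opp}. The delicate point you flag --- that only a boundary root on the selected branch $\delta^+$ can violate the uniform Lopatinski condition --- is precisely the crux of the paper's proof as well.
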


\paragraph{Acknowledgements} A part of this work was done during the stay  of Yuri Trakhinin  as a Visiting Professor of the University of Brescia in April--May 2018. This author gratefully thanks  the Mathematical Division of the Department of Civil, Environmental, Architectural Engineering and Mathematics  of the University of Brescia for its kind hospitality. This work was also partially supported by RFBR (Russian Foundation for Basic Research) grant No. 19-01-00261-a and FFABR (Fondo di Finanziamento per le Attivit\`a Base di Ricerca) grant No. D83C18000060001.

\appendix
\section{Uniform stability condition}\label{appA}

By virtue of \eqref{G}, the eigenvalues $\lambda$ of the matrix $G$ obey the equation
\[
\det \left( (\mathcal{A}-\mathcal{C})\lambda^2-2\mathcal{B}\lambda +\mathcal{A}+\mathcal{C}\right)=0.
\]
Omitting simple calculations, we obtain from it the following sixth-order polynomial equation:
\[
(\lambda^2+2\alpha\lambda +1) h(\lambda )=0,
\]
where $h(\lambda )=b_4\lambda^4+b_3\lambda^3+b_2\lambda^2+b_1\lambda+b_0$, with
\[
\begin{split}
& b_4= M\left(\frac{a_1}{\beta^2}+a_2\right),\quad b_3=2(1+M_*a_2),\quad b_2=\frac{2M}{\beta^2}\left(2\beta^2\widetilde{d}_0-a_1\right),\\
& b_1=2(1-M_*a_2),\quad b_0=M\left(\frac{a_1}{\beta^2}-a_2\right).
\end{split}
\]

Thanks to our assumption that $\alpha >1$ the both roots of the equation $\lambda^2+2\alpha\lambda +1=0$ are strictly negative. That is, condition \eqref{eG} is reduced to the requirement that all the roots of the fourth-order polynomial equation $h(\lambda )=0$ have negative real parts. Referring to the the Li\'{e}nard--Chipart criterion \cite{LS}, this is equivalent to the following set of six inequalities:
\begin{equation}
b_k>0\quad\mbox{for}\ k=\overline{0,4}\quad \mbox{and}\quad b_1(b_2b_3-b_1b_4)-b_3^2b_0>0.
\label{bk}
\end{equation}
The inequalities $b_1>0$ and $b_3>0$ are equivalent to the single inequality $a_2^2<1/M_*^2$ which is reduced to $\ell_0^2<M^2(1+M^2_2)$. Thanks to the Lax condition $M>M_1$, cf. \eqref{Mach}, even the more restrictive requirement $\ell_0^2<M^2M^2_2$ always holds:
\[
M^2M_2^2>M_1^2M_2^2=\ell_0^2+(\det\mathcal{F})^2>\ell_0^2.
\]
The inequalities $b_1>0$ and $b_3>0$ are thus satisfied automatically.

Let us now analyze the last inequality in \eqref{bk}. Omitting calculations, we rewrite it as
\begin{equation}
\frac{\beta^2\widetilde{d}_0-a_1}{\beta^2}-\frac{M_*\beta^2}{2M^2}\,a_2^2>0.
\label{bkl}
\end{equation}
For the fulfilment of \eqref{bkl} it is necessary that $\beta\widetilde{d}_0-a_1>0$. Since $\widetilde{d}_0>0$, the fulfilment of \eqref{bkl} guarantees that $b_2>0$. After some algebra we reduce inequality \eqref{bkl} to
\[
\ell_0^2<M^2M_2^2+RM^2(M^2-M_1^2).
\]
The last inequality always holds thanks to the Lax condition $M>M_1$ and the above mentioned true inequality  $\ell_0^2<M^2M^2_2$.

It remains to require the fulfilment of the conditions $b_0>0$ and $b_4>0$. They are reduced to the single inequality $a_1>\beta^2|a_2|$. This inequality multiplied by $2M^2M_*\sigma^2/\beta^2$ is nothing else as the sufficient uniform stability condition \eqref{usc}.

\section{Proof of Theorem \ref{t2}}\label{appB}

Taking into account Lemma \ref{l1}, we may introduce the ``elastic'' Mach number $\widetilde{M}=\sqrt{M^2-M_1^2}$, and we know that $0<\widetilde{M}<1$. Omitting technical calculations,
%(in which we use, for example, that $\ell_0^2=M_1^2M_2^2-(\det \mathcal{F})^2$, cf. \eqref{sigma}),
we equivalently rewrite the stability condition \eqref{usc} as
\begin{equation}\label{usc1}
\widetilde{M}^2(R-1)<1+ \frac{\mathcal{D}}{M_*^4},
\end{equation}
where
\begin{equation}\label{DD}
\mathcal{D}=(M\sigma -|\ell_0|\beta -M_*^2\widetilde{M})(M\sigma -|\ell_0|\beta +M_*^2\widetilde{M}).
\end{equation}
Comparing \eqref{usc1} with Majda's condition \eqref{gas}, we see that if $\mathcal{D}>0$, then the elastic force plays stabilizing role. Moreover, in view of \eqref{gas'}, $\mathcal{D}>0$ implies that the stability condition \eqref{usc1} always holds for compressive shocks  with a convex equation of state.

That is, our goal now is to prove that $\mathcal{D}>0$. We calculate:
\begin{equation}
\label{RR}
M^2\sigma^2-\ell_0^2\beta^2=M_*^2\big(M_2^2\widetilde{M}^2+M^2+(\det \mathcal{F})^2\big)>0.
\end{equation}
This implies that $M\sigma -|\ell_0|\beta >0$ and, hence,
\[
M\sigma -|\ell_0|\beta +M_*^2\widetilde{M}>0
\]
(recall that $\sigma >0$ and $\beta =\sqrt{1-\widetilde{M}^2}>0$). That is, the condition  $\mathcal{D}>0$ is equivalent to
\begin{equation}
M\sigma -|\ell_0|\beta - M_*^2\widetilde{M}>0.
\label{D}
\end{equation}

Inequality  \eqref{D} is rewritten as the following desired inequality for the ``elastic'' Mach number $\widetilde{M}$:
\begin{equation}
\sigma\sqrt{M_1^2+\widetilde{M}^2} -M_*^2\widetilde{M} >|\ell_0|\sqrt{1-\widetilde{M}^2}.
\label{D'}
\end{equation}
Since
\[
\sigma^2 \big(M_1^2+\widetilde{M}^2\big) -M_*^4\widetilde{M}^2=M_*^2M_1^2 \big(1-\widetilde{M}^2\big)+\big(M_2^2+(\det \mathcal{F})^2\big)M^2>0
\]
(we omit simple intermediate calculations here), the left-hand side in \eqref{D'} is strictly positive. Squaring \eqref{D'}, we obtain the equivalent inequality
\begin{equation}
\big(\sigma^2+M_*^4+\ell_0^2\big)\widetilde{M}^2 +\sigma^2M_1^2-\ell_0^2> 2M_*^2\sigma\widetilde{M}\sqrt{M_1^2+\widetilde{M}^2} .
\label{D''}
\end{equation}
Setting in \eqref{RR} formally $M=M_1$, we get the true inequality $\sigma^2M_1^2-\ell_0^2>0$. Hence, the left-hand side in \eqref{D''} is strictly positive. Squaring \eqref{D''} gives the equivalent inequality
\begin{equation}
\begin{split}
\Big\{\big(\sigma^2+ & M_*^4+\ell_0^2\big)^2 -4M_*^4\sigma^2\Big\} Z^2 \\ & +2 \left\{\big(\sigma^2+M_*^4+\ell_0^2\big) \big(\sigma^2M_1^2-\ell_0^2\big)-2M_*^4\sigma^2M_1^2\right\}Z+\big(\sigma^2M_1^2-\ell_0^2\big)^2 >0
\end{split}
\label{D1}
\end{equation}
for $Z=\widetilde{M}^2 \in (0,1)$.

Noticing that
\[
\sigma^2+  M_*^4+\ell_0^2=M_*^2\big(1+M_*^2+M_2^2\big)\quad \mbox{and}\quad \sigma^2M_1^2-\ell_0^2=M_*^2\big(M_1^2+M_1^2M_2^2-\ell_0^2\big),
\]
dividing \eqref{D1} by $M_*^4$ and using that $\ell_0^2=M_1^2M_2^2-(\det \mathcal{F})^2$, we equivalently rewrite \eqref{D1} as
\begin{equation}
\begin{split}
\big\{\big(M_1^2+M_2^2\big)^2 & -4\varkappa^2\big\}  Z^2 \\ & +2 \left\{\big(M_2^2-M_1^2\big)\big(M_1^2+\varkappa^2\big) +2\varkappa^2-2M_1^2M_2^2\right\} Z +\big(M_1^2+\varkappa^2\big)^2 >0,
\end{split}
\label{D2}
\end{equation}
where $\varkappa =\det\mathcal{F}$. That is, it remains to prove that inequality \eqref{D2} holds true for all $Z\in (0,1)$. Standard arguments for the quadratic function in \eqref{D2} lead to a bulky inequality whose validity is unclear, but fortunately we can just write down \eqref{D2} as
\[
4\varkappa^2Z(1-Z) +\left( (M_1^2+M_2^2)Z-(M_1^2+\varkappa^2)\right)^2 +4M_2^2\varkappa^2Z>0.
\]
The last inequality is true for all $Z\in (0,1)$. We have thus proved that $\mathcal{D}>0$. It means that if the function $p(\rho )$ is convex, then the a priori estimate \eqref{aprest'} always holds for compressive shocks. Referring to \cite{Tsiam}, we conclude that these shock waves are structurally stable.


\begin{thebibliography}{100}

\bibitem{BMZ}
Barker B., Monteiro R., Zumbrun K. Transverse bifurcation of viscous slow MHD shocks. arXiv:1901.09153.

\bibitem{Bell} Bellman R. {\it Introduction to Matrix Analysis.} 2nd ed. Classics in Applied Mathematics, vol. 19. SIAM, Philadelphia, PA, 1997.

\bibitem{BRSZ}
Benzoni-Gavage S., Rousset F., Serre D., Zumbrun K. Generic types and transitions in hyperbolic
initial--boundary-value problems. {\it Proc. Roy. Soc. Edinburgh} {\bf 132A} (2002), 1073--1104.

\bibitem{BS}
Benzoni-Gavage S., Serre D. {\it Multidimensional hyperbolic partial differential equations. First-order systems and applications}. Oxford University Press, Oxford, 2007.

\bibitem{Bl79} Blokhin A.M. The mixed problem for the system of equations of acoustics
with boundary conditions on a shock wave. {\it Izv. Sibirsk. Otdel. Akad. Nauk SSSR Ser.
Tekhn. Nauk} {\bf 13} (1979), 25--33 (in Russian).

\bibitem{Bl81} Blokhin A.M. An estimate of the energy integral of a mixed problem for the equations of gasdynamics with boundary conditions on a shock wave {\it Siberian Math. J.} {\bf 22} (1981), 501--523.

\bibitem{Bl82} Blokhin A. M. Uniqueness of the classical solution of a mixed problem for equations of gas dynamics with boundary conditions on the shock wave. {\it Siberian Math. J.} {\bf 23} (1982), 604--615.

\bibitem{BD} Blokhin A.M., Druzhinin I.Yu. Stability of shock waves in magnetohydrodynamics. {\it Siberian Math. J.} {\bf 30}  (1989), 511--524.

\bibitem{BTejm} Blokhin A.M., Trakhinin Yu.L. Stability of fast parallel MHD shock waves in polytropic gas. {\it Eur. J. Mech. B/Fluids} {\bf 18}  (1999), 197--211.

\bibitem{BThand}
Blokhin A., Trakhinin Y. Stability of strong discontinuities in fluids and MHD. In: {\it Handbook of mathematical fluid dynamics, vol. 1} (ed. Friedlander S., Serre D.), pp. 545--652. Amsterdam: North-Holland, 2002.

\bibitem{CHW1}
Chen R.M., Hu J., Wang D. Linear stability of compressible vortex sheets in two-dimensional elastodynamics. {\it Adv. Math.} {\bf 311} (2017), 18–60.

\bibitem{CHW2}
Chen R.M., Hu J., Wang D. Linear stability of compressible vortex sheets in 2D elastodynamics: variable coefficients. {\it Math. Ann.} (2019). https://doi.org/10.1007/s00208-018-01798-w (in press).

\bibitem{CS}
Coulombel J.-F., Secchi P.  Nonlinear compressible vortex sheets in two space dimensions. {\it Ann. Sci. \'Ecole Norm. Sup. (4)} \textbf{41}  (2008), 85--139

\bibitem{Daf}
Dafermos C.M. {\it Hyperbolic Conservation Laws in Continuum Physics}. 4th ed. Grundlehren Math. Wiss., vol. 325. Springer-Verlag, Berlin, 2016.

\bibitem{D'iak54}
D'iakov S.P. On stability of shock waves. {\it Zh. Eksp. Teor. Fiz.} {\bf 27} (1954), 288--296 (in Russian); English transl. in: {\it Atomic Energy Research Establishment AERE Lib./trans.} 648, 1956.

\bibitem{Fil} Filippova O.L. Stability of plane MHD shock waves in an ideal gas. {\it Fluid Dyn.} {\bf 26} (1991), 897--904.

\bibitem{FKS}
Freist\"{u}hler H., Kleber F., Schropp J. Emergence of unstable modes for classical shock waves in isothermal ideal MHD. {\it Physica D} {\bf 358} (2017), 25--32.

\bibitem{FT}
Freist\"{u}hler H., Trakhinin Y. On viscous and inviscid stability of magnetohydrodynamic shock waves.
{\it Physica D} {\bf 237} (2008), 3030--3037.

\bibitem{GK}
Gardner C.S., Kruskal M.D. Stability of plane magnetohydrodynamic shocks. {\it Phys. Fluids} {\bf 7} (1964), 700--706.

\bibitem{Gurt}
Gurtin M.E. {\it An introduction to Continuum Mechanics}. Mathematics in Science and Engineering, vol. 158. Academic Press, New York--London, 1981.

\bibitem{Jos}
Joseph D.  {\it Fluid Dynamics of Viscoelastic Liquids}. Applied Mathematical Sciences, vol. 84. Springer-Verlag, New York, 1990.

\bibitem{HW}
Hao C., Wang D. A priori estimates for the free boundary problem of incompressible neo-Hookean elastodynamics. {\it J. Differential Equations} {\bf 261} (2016) 712--737.

\bibitem{Hersh}
Hersh R. Mixed problems in several variables. {\it J. Math. Mech.} {\bf 12} (1963), 317--334.

\bibitem{Hu}
Hu J. {\it Vortex sheets in elastic fluids}. PhD thesis, University of Pittsburgh, 2017.
Hu J.

\bibitem{Kwon}
Kwon B. Structural conditions for full MHD equations. {\it Quar. Appl. Math.} {\bf 7}  (2009), 593--600.

\bibitem{Kreiss}
Kreiss H.-O. Initial boundary value problems for hyperbolic systems. {\it Commun. Pure Appl. Math.} {\bf 23} (1970), 277--296.

\bibitem{Lax}
Lax P.D. Hyperbolic systems of conservation laws. II. {\it Commun. Pure and Appl. Math.} {\bf 10} (1957), 537--566.

\bibitem{LessDesh67} Lessen M., Deshpande M.V. Stability of magnetohydrodynamic shocks waves. {\it J. Plasma Physics} {\bf 1} (1967), 463--472.

\bibitem{LiWangZhang}
Li H., Wang W., Zhang Z. Well-posedness of the free boundary problem in incompressible elastodynamics. arXiv:1802.08819.


\bibitem{LS}
Li\'{e}nard A., Chipart M.H. Sur le signe de la partie r\'{e}elle des racines d'une \'{e}quation alg\'{e}brique. {\it J. Math. Pures Appl.} {\bf 10} (1914), 291--346.

\bibitem{M1}
Majda A. The stability of multi-dimensional shock fronts. {\it Mem. Amer. Math. Soc.} {\bf 41}:275 (1983), 1--95.

\bibitem{M2}
Majda A. The existence of multi-dimensional shock fronts. {\it Mem. Amer. Math. Soc.} {\bf 43}:281 (1983), 1--93.

\bibitem{Met}
M\'etivier  G. Stability of multidimensional shocks. In: {\it Advances in the theory of shock waves}, Freist\"{u}hler H., Szepessy A. (eds.), Progr. Nonlinear Differential Equations Appl. Birkh\"auser, Boston, {\bf 47} (2001), 25--103.

\bibitem{MZ}
M\'etivier G., Zumbrun K. Hyperbolic boundary value problems for symmetric systems with variable multiplicities. {\it J. Differential Equations} {\bf 211}  (2005), 61--134.

\bibitem{Old1}
Oldroyd J.G. On the formation of rheological equations of state. {\it Proc. Roy. Soc. London, Series A} {\bf 200} (1950), 523--541.	

\bibitem{Old2}
Oldroyd J.G. Non-Newtonian effects in steady motion of some idealized elastico-viscous liquids. {\it Proc. Roy. Soc. London, Series A} {\bf 245} (1958), 278--297.

\bibitem{Tcmp}
Trakhinin Y. A complete 2D stability analysis of fast MHD shocks in an ideal gas. {\it Comm. Math. Phys.} {\bf 236} (2003), 65--92.

\bibitem{Tsiam} Trakhinin Y. Dissipative symmetrizers of hyperbolic problems and their applications to shock waves and
characteristic discontinuities. {\it SIAM J. Math. Anal.} {\bf 37} (2006), 1988--2024.

\bibitem{T09}
Trakhinin Y. The existence of current-vortex sheets in ideal compressible magnetohydrodynamics. \textit{Arch. Ration. Mech. Anal.} \textbf{191} (2009), 245--310.

\bibitem{T18}
Trakhinin Y. Well-posedness of the free boundary problem in compressible elastodynamics. {\it J. Differential Equations} {\bf 264} (2018), 1661--1715.

\end{thebibliography}
\end{document}